\documentclass[a4paper,12pt]{article}
\setlength{\textwidth}{16cm}
\setlength{\textheight}{23cm}
\setlength{\oddsidemargin}{0mm}
\setlength{\topmargin}{-1cm}

\usepackage{latexsym}
\usepackage{amsmath}
\usepackage{amssymb}
\usepackage{enumerate}

\usepackage{theorem}
\newtheorem{theorem}{Theorem}[section]
\newtheorem{proposition}[theorem]{Proposition}
\newtheorem{lemma}[theorem]{Lemma}
\newtheorem{corollary}[theorem]{Corollary}

\newtheorem{MT}{Main Theorem}
\newtheorem{TCT}{A New Type of Toponogov  Comparison Theorem}

\theorembodyfont{\rmfamily}
\newtheorem{proof}{\textmd{\textit{Proof.}}}

\newtheorem{remark}[theorem]{Remark}

\newtheorem{acknowledgement}{\textmd{\textit{Acknowledgements.}}}

\makeatletter

\@addtoreset{equation}{section}
\makeatother

\newcommand{\qedd}{\hfill \Box}
\newcommand{\ve}{\varepsilon}
\newcommand{\lra}{\longrightarrow}
\newcommand{\wt}{\widetilde}
\newcommand{\wh}{\widehat}
\newcommand{\ol}{\overline}

\newcommand{\B}{\ensuremath{\mathbb{B}}}
\newcommand{\N}{\ensuremath{\mathbb{N}}}
\newcommand{\R}{\ensuremath{\mathbb{R}}}
\newcommand{\Sph}{\ensuremath{\mathbb{S}}}

\newcommand{\cE}{\ensuremath{\mathcal{E}}}

\newcommand{\cR}{\ensuremath{\mathcal{R}}}

\newcommand{\cU}{\ensuremath{\mathcal{U}}}

\def\diam{\mathop{\mathrm{diam}}\nolimits}

\title
{
Total Curvatures of Model Surfaces Control\\ 
Topology\hspace{-0.5mm} of Complete Open Manifolds with\\ 
Radial Curvature Bounded Below.\,III\footnote{
Mathematics Subject Classification (2000)\,:\,53C20, 53C21.}
\footnote{
Keywords\,:\,Busemann function, radial curvature, total curvature
}
}
\author{Kei KONDO $\cdot$ Minoru TANAKA}
\date{}
\pagestyle{plain}

\begin{document}%%%%%%%%%%%%%%%%%%%%%%%%%%%%%%%%%%%%%%%%%%%%%%%%%%%
\maketitle

\vspace{-7.7mm}
\begin{center}
{\em \footnotesize Dedicated to Professor K. Shiohama on the occasion of his seventieth birthday.}
\end{center}

\begin{abstract}
This article is the third in a series of our investigation on a complete 
non-compact connected Riemannian manifold $M$. In the first series \cite{KT1}, 
we showed that all Busemann functions on an $M$ which is not less curved than a von Mangoldt 
surface of revolution $\wt{M}$ are exhaustions, if the total curvature of $\wt{M}$ is 
greater than $\pi$. A von Mangoldt surface of revolution is, by definition, a complete surface of revolution homeomorphic to $\R^{2}$ whose Gaussian curvature is non-increasing along each meridian. 
Our purpose of this series is to generalize the main theorem in \cite{KT1} to an $M$ which is 
not less curved than a more general surface of revolution.
\end{abstract}

\section{Introduction}\label{sec:intro}
The Gauss--Bonnet theorem says that 
the total curvature $c(S)$ of a {\bf compact} Riemannian $2$-dimensional manifold $S$ 
is a topological invariant, i.e., 
\[
c(S) = 2 \pi \chi (S).
\] 
Here $\chi (S)$ denotes the Euler characteristic of $S$.\par 
In 1935, Cohn\,-Vossen generalized the Gauss--Bonnet theorem for complete 
{\bf non-compact} Riemannian $2$-dimensional manifolds as follows\,:

\begin{theorem}{\rm (\cite[Satz 6]{CV1})}\label{CVthm}
If a connected, complete non-compact, finitely connected Riemannian 
$2$-manifold $M$ admits a total curvature $c(M)$, then,
\[
c(M) \le 2 \pi \chi (M)
\]
holds. Here $\chi (M)$ denotes the Euler characteristic of $M$.
\end{theorem}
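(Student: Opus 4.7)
The plan is to exhaust $M$ by relatively compact subdomains, apply the classical Gauss--Bonnet formula on each, and analyze the resulting boundary integrals in the limit. Since $M$ is finitely connected, one may fix a compact subdomain $\cK\subset M$ with piecewise smooth boundary so that $M\setminus\cK$ is a disjoint union of finitely many open ends $E_{1},\ldots,E_{k}$, each homeomorphic to the half-cylinder $\Sph^{1}\times[0,\infty)$. I would then construct an exhaustion $\cK\subset D_{1}\subset D_{2}\subset\cdots$ of $M$ by relatively compact open subdomains with piecewise smooth boundary, chosen so that $\partial D_{n}\subset M\setminus\cK$ and $\chi(D_{n})=\chi(M)$ for every sufficiently large $n$.

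Applying the Gauss--Bonnet formula for surfaces with piecewise smooth boundary to each $D_{n}$ gives
\[
\int_{D_{n}}K\,dA\;+\;\int_{\partial D_{n}}\kappa_{g}\,ds\;+\;\sum_{j}(\pi-\theta_{j}^{(n)})\;=\;2\pi\chi(M),
\]
where $K$ is the Gaussian curvature, $\kappa_{g}$ the geodesic curvature of $\partial D_{n}$ with respect to the outward unit normal, and the sum runs over the corners of $\partial D_{n}$ with interior angles $\theta_{j}^{(n)}\in(0,\pi]$. Since $M$ admits a total curvature, $\int_{D_{n}}K\,dA\to c(M)$, and therefore
\[
\lim_{n\to\infty}\Bigl[\int_{\partial D_{n}}\kappa_{g}\,ds+\sum_{j}(\pi-\theta_{j}^{(n)})\Bigr]=2\pi\chi(M)-c(M).
\]
The theorem is equivalent to showing that the exhaustion may be chosen so that this limit is non-negative.

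For the construction I would take $D_{n}=B_{r_{n}}(p)$ for a sequence $r_{n}\uparrow\infty$ of regular values of the distance function $d(\cdot,p)$ from a fixed base point $p\in\cK$. By Sard's theorem almost every $r>0$ is regular, and for such $r$ the distance sphere $S_{r}=\partial B_{r}(p)$ is a disjoint union of piecewise smooth Jordan curves whose only corners come from the cut locus of $p$; at each corner two distinct minimizing geodesics from $p$ meet, forcing the interior angle $\theta_{j}^{(n)}\in(0,\pi]$, so the defect $\pi-\theta_{j}^{(n)}\ge 0$. For $r$ sufficiently large one has $B_{r}(p)\supset\overline{\cK}$ and $\chi(B_{r}(p))=\chi(M)$.

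The main obstacle, and what I expect to be the technical heart of the argument, is selecting the $r_{n}$ so that $a_{r_{n}}:=\int_{S_{r_{n}}}\kappa_{g}\,ds$ is asymptotically non-negative. The tool is the first variation of arc length: on the smooth part of $S_{r}$ away from the cut locus one has $L'(r)=a_{r}$, where $L(r):=\mathrm{length}(S_{r})$; at each cut-locus encounter an arc of $S_{r}$ is absorbed into the ball, which only decreases $L$. Integrating yields, for every regular $r_{0}<r$,
\[
\int_{r_{0}}^{r}a_{s}\,ds\;\ge\;L(r)-L(r_{0})\;\ge\;-L(r_{0}).
\]
Were it impossible to extract $r_{n}\uparrow\infty$ with $a_{r_{n}}\ge-\varepsilon_{n}$ and $\varepsilon_{n}\to 0$, there would exist $\delta>0$ such that $a_{s}\le-\delta$ for every sufficiently large regular $s$, whence $\int_{r_{0}}^{r}a_{s}\,ds\to-\infty$ as $r\to\infty$, contradicting the bound above. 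Combining this extraction with the non-negativity of the corner contributions gives $\liminf_{n}\bigl[a_{r_{n}}+\sum_{j}(\pi-\theta_{j}^{(n)})\bigr]\ge 0$, and the Gauss--Bonnet identity then yields $c(M)\le 2\pi\chi(M)$.
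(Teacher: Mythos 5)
The paper does not prove this statement---it is quoted from Cohn--Vossen \cite{CV1} without proof---so your argument has to be measured against the classical proofs (Cohn--Vossen's original one and the Fiala--Hartman--Shiohama distance-circle argument in \cite[Chapter 5]{SST}). Your overall strategy is the standard one, but there is a sign error at the decisive point. At a corner of $S_r=\partial B_r(p)$ on the cut locus, where two minimal geodesics from $p$ arrive with terminal directions $u_1\neq u_2$ making an angle $\varphi\in(0,\pi)$, the ball $\{d(\cdot,p)<r\}$ is locally the \emph{union} $\{f_1<r\}\cup\{f_2<r\}$ of two half-disks, where $d=\min(f_1,f_2)$ near the corner and $\nabla f_i=u_i$. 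The interior angle of such a union is $\pi+\varphi>\pi$, not an angle in $(0,\pi]$; you would get $\le\pi$ for the \emph{intersection}. (Check this on the flat cylinder of circumference $1$: each component of $S_t$ is a circular arc of radius $t$ closed up across the cut locus, and the interior angle of $B_t$ at that point is $\pi+\varphi$ with $\varphi\approx 1/t$.) Hence every corner defect is $\pi-\theta_j^{(n)}=-\varphi_j<0$, and your concluding step fails: Gauss--Bonnet gives $a_r=2\pi\chi(B_r)-\int_{B_r}K\,dA+\sum_j\varphi_j$, so your (correct) extraction of $r_n$ with $a_{r_n}\ge-\varepsilon_n$ only yields $c(M)\le 2\pi\chi(M)+\limsup_n\sum_j\varphi_j(r_n)$, and you have no control over the corner sum.

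The repair is exactly where the real content of the theorem lives. The first variation at a reflex corner loses length at rate $2\tan(\varphi_j/2)$ per unit increase of $r$ (the two outward-flowed arcs overlap and the level set is trimmed), and since $2\tan(\varphi/2)\ge\varphi$ this exactly absorbs the negative Gauss--Bonnet corner term, giving the stronger a.e.\ inequality $L'(r)\le a_r-\sum_j\varphi_j=2\pi\chi(B_r(p))-\int_{B_r(p)}K\,dA$. Integrating this and using $L\ge 0$ finishes the proof at once: if $c(M)>2\pi\chi(M)$ the right-hand side is eventually $\le-\delta<0$, forcing $L(r)\to-\infty$. Your weaker inequality $L'\le a_r$ is true but insufficient. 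Two further points need care: ``by Sard's theorem'' is not available for the merely Lipschitz function $d(\cdot,p)$---that $S_r$ is a finite union of piecewise smooth circles for a.e.\ $r$ is the nontrivial Fiala--Hartman structure theorem (reference \cite{H} of the paper)---and one must handle possible compact components of $M\setminus B_r(p)$ before asserting $\chi(B_r(p))=\chi(M)$.
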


\medskip\noindent
Notice the total curvature $c(M)$ is not a topological invariant anymore. 
But $2 \pi \chi (M) - c(M)$ is a geometric invariant depending only on the ends of $M$, which is 
a consequence from the isoperimetric inequalities (see \cite[Theorem 5.2.1]{SST}).

\bigskip

In 1984, Shiohama proved the next result peculiar to geometry of total curvature on surfaces\,:

\begin{theorem}{\rm (\cite[Main Theorem]{Sh1})}\label{Sh1}
Let $M$ be a connected, complete non-compact, finitely connected and 
oriented Riemannian $2$-manifold with one end. 
If the total curvature $c (M)$ satisfies 
\[
c(M) > (2 \chi (M) - 1) \pi,
\]
then all Busemann functions on $M$ are exhaustions. 
In particular, if the total curvature of $M$ is greater than $\pi$, 
then $M$ is homeomorphic to $\R^2$ 
and also all Busemann functions are exhaustions. 
\end{theorem}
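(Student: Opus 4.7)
The plan is to argue by contradiction for the main claim and to derive the ``in particular'' statement from Cohn\,-Vossen. Fix a ray $\gamma:[0,\infty) \to M$ and suppose that the Busemann function $F_\gamma$ is not an exhaustion, so that there is some $c \in \R$ for which $\{F_\gamma \ge c\}$ contains a divergent sequence $q_n \to \infty$. For each $n$ I would join $q_n$ to $\gamma(t_n)$ by a minimizing geodesic, with $t_n$ going to infinity much faster than $d(q_n,\gamma(0))$. A standard Arzel\`a--Ascoli/diagonal argument then produces, at a limit point of $\{q_n\}$, a new ray $\sigma$ which is a \emph{co-ray} to $\gamma$ in the sense that $F_\gamma\circ\sigma$ is bounded below. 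In particular, $\sigma$ is not a forward reparametrization of $\gamma$; since $M$ is finitely connected with a single end, the forward images of $\gamma$ and $\sigma$ eventually lie in the same end (a topological half-cylinder) without crossing past some point.

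Next I would quantify the angular separation of $\gamma$ and $\sigma$ at infinity via Gauss\,-Bonnet on a sub-region. Fix a basepoint $p$, and for large $r$ let $D(r)$ be the piecewise smooth region bounded by a sub-arc of $\gamma$, a sub-arc of $\sigma$, a short arc joining their base points near $p$, and the piece of $\partial B_r(p)$ they cut off inside the end. Applying Gauss\,-Bonnet to $D(r)$ gives an identity among the total curvature of $D(r)$, the geodesic-curvature integral along $\partial B_r(p)\cap D(r)$, and the exterior angles at the corners. As $r\to\infty$ the turning integral along $\partial B_r(p)$ has a well-defined limit, namely the ``angle at infinity'' of the strip between $\gamma$ and $\sigma$, by the isoperimetric bookkeeping underlying Theorem~\ref{CVthm}. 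Performing the analogous computation on the complement of $D(r)$ in $M$ and summing, one should obtain
\[
c(M) \le (2\chi(M)-1)\pi,
\]
contradicting the hypothesis. The ``in particular'' statement then follows at once: Cohn\,-Vossen gives $c(M)\le 2\pi\chi(M)$, so $c(M)>\pi$ forces $\chi(M)\ge 1$; a connected, non-compact, finitely connected, orientable surface with one end and $\chi\ge 1$ is homeomorphic to $\R^2$, whence the first part of the theorem applies and all Busemann functions are exhaustions.

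The step I expect to be hardest is carrying out the limit $r\to\infty$ in the Gauss\,-Bonnet computation so that the bookkeeping yields \emph{exactly} the bound $c(M)\le(2\chi(M)-1)\pi$. Two substantive issues must be handled: first, one must show that the co-ray $\sigma$ is genuinely angularly separated from $\gamma$ at infinity (and does not cross back over $\gamma$), so that the two regions cut out by $\gamma$ and $\sigma$ have the right topology for large $r$; second, the geodesic-curvature integrals along long arcs of $\partial B_r(p)$ can behave wildly through the cut locus, so one must instead exhaust $M$ by compact sets whose boundary turning is controlled, with the turning contribution at infinity identified with $2\pi\chi(M)-c(M)$ as in the refinement of Theorem~\ref{CVthm}. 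Converting these two inputs into the clean inequality above, together with the precise accounting of the angle ``between'' $\gamma$ and $\sigma$ contributed by the co-ray construction, is the real work of the proof.
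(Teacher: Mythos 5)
The paper does not prove Theorem \ref{Sh1}: it is quoted from Shiohama \cite{Sh1}, so there is no in-paper proof to compare against. The closest in-paper analogue of the argument is Section \ref{sec:mass} together with the proof of Theorem \ref{prop2009-05-08}, which runs the two-dimensional mechanism on the model surface and lifts it by Toponogov comparison. Measured against that mechanism (which is essentially Shiohama's), your outline aims at the right contradiction ($c(M)\le(2\chi(M)-1)\pi$) with the right family of tools (Gauss--Bonnet/Cohn--Vossen bookkeeping on regions bounded by rays), and your derivation of the ``in particular'' clause is correct: $c(M)>\pi$ and $c(M)\le 2\pi\chi(M)$ force $\chi(M)=1$, and a finitely connected, orientable, one-ended non-compact surface with $\chi=1$ is homeomorphic to $\R^{2}$.

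There are, however, genuine gaps. First, the negation of ``exhaustion'' is mis-stated: what must be non-compact is a sublevel set $F_{\gamma}^{-1}(-\infty,a]$; the superlevel sets $\{F_{\gamma}\ge c\}$ are always non-compact because they contain a tail of $\gamma$. The contradiction must come from $F_{\gamma}$ being bounded \emph{above} on a divergent sequence $\{q_{n}\}$. Second, ``at a limit point of $\{q_{n}\}$'' is vacuous for a divergent sequence; the limit ray has to be extracted at a fixed basepoint $p$, e.g.\ from minimal geodesics joining $p$ to $q_{n}$. Third, and most seriously, the step that actually consumes the hypothesis $c(M)>(2\chi(M)-1)\pi$ is missing: it is not the bare existence of a co-ray $\sigma$ with $F_{\gamma}\circ\sigma$ bounded that is exploited, and it is not explained why Gauss--Bonnet on the strip between $\gamma$ and $\sigma$ should close up to exactly $(2\chi(M)-1)\pi$. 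The working argument is instead: for $q$ outside a large ball, the rays emanating from $q$ cut off a region $V_{q}$ containing the core of $M$ and containing no ray from $q$; by \cite[Lemma 6.1.3]{SST} the total curvature of $V_{q}$ equals its interior angle at $q$, and the curvature hypothesis forces that angle to exceed $\pi$ by a definite margin $2\Lambda_{0}$ (this is exactly Lemmas \ref{lem2009-11-12}--\ref{lem2009-08-26-1} of the present paper in the model case). Hence every asymptotic ray to $\gamma$ issuing from a far point of a minimal geodesic $\alpha$ from $p$ makes angle at most $\pi/2-\Lambda_{0}$ with $\alpha'$, so $F_{\gamma}$ increases at rate at least $\sin\Lambda_{0}$ along $\alpha$ outside a compact set, contradicting the bound at the $q_{n}$ (this is Lemmas \ref{lem2009-04-26-1}--\ref{lem2009-05-07-2}). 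Your proposal explicitly defers precisely this quantitative step to ``the real work,'' so as written it is a plan rather than a proof.
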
 

\bigskip\noindent
Here the {\em Busemann function} $F_{\gamma}:M \lra \R$ of a ray $\gamma$ in 
a complete non-compact Riemannian (any dimensional) manifold $M$ is, by definition, 
\[
F_{\gamma}(x) := \lim_{t \to \infty} \left\{ t - d(x, \gamma(t)) \right\}_,
\]
and a function $\varphi : M \lra \R$ is called an exhaustion, 
if $\varphi^{-1}( - \infty, a ]$ is compact for all $a \in \R$. 

\bigskip\noindent

Theorem \ref{Sh1} was generalized to higher-dimensional manifolds in \cite{KT1}. 
Roughly speaking, it was proved in \cite{KT1} that all Busemann functions on a complete 
non-compact connected Riemannian manifold not less curved than a von Mangoldt 
surface of revolution $\wt{M}$ are exhaustions, if the total curvature of $\wt{M}$ is 
greater than $\pi$ (The theorem will be later stated in full detail as Theorem \ref{thm1.4} in this article). 
\par 
A von Mangoldt surface of revolution is, by definition, a complete surface of revolution homeomorphic to $\R^{2}$ whose Gaussian curvature is non-increasing along each meridian. The monotonicity of 
the Gaussian curvature of a von Mangoldt surface of revolution looks restrictive, but very familiar surfaces such as a paraboloid or a $2$-sheeted hyperboloid are von Mangoldt surfaces of 
revolution.

\bigskip

Although Cohn\,-Vossen restricted himself to $2$-dimensional manifolds, 
he has developed fundamental techniques, 
such as drawing a circle or a geodesic polygon, and joining two points 
by a minimal geodesic segment, to investigate 
the structures of complete Riemannian $2$-dimensional manifolds. 
We, Riemannian geometers, should be awed by the fact that 
such techniques are ever now not only useful, but also powerful 
for investigating the topology of {\bf any dimensional} complete Riemannian manifolds.\par 
Furthermore, as pointed out in the preface of \cite{SST}, 
{\em 
it took more than thirty years to obtain higher-dimensional extensions of 
Cohn\,-Vossen's results} for complete non-compact Riemannian $2$-dimensional manifolds. 
They are the splitting theorem by Toponogov \cite{To}, 
the structure theorem with positive sectional curvature by Gromoll and Meyer \cite{GM}, 
and the soul theorem with non-negative sectional curvature by Cheeger and Gromoll \cite{CG}. 
Hence, it requires many years and is also very difficult to generalize some results peculiar to geometry 
of surfaces to any dimensional complete Riemannian manifolds. 
In fact, one may find such results in \cite{SST}, which have not been generalized in higher 
dimensions yet.

\bigskip

Our purpose of this article is to generalize the main theorem in \cite{KT1} to a complete 
non-compact connected Riemannian manifold not less curved than a more general 
surface of revolution. To state this precisely, we will begin on the definition of a non-compact 
model surface of revolution. 

\bigskip
 
Let $\wt{M}$ denote a complete $2$-dimensional Riemannian manifold 
homeomorphic to $\R^{2}$ with a base point $\tilde{p} \in \wt{M}$. 
Then, we call the pair $(\wt{M}, \tilde{p})$ a {\em non-compact model surface of revolution} 
if its Riemannian metric $d\tilde{s}^2$ is expressed 
in terms of geodesic polar coordinates around $\tilde{p}$ as 
\begin{equation}\label{polar}
d\tilde{s}^2 = dt^2 + f(t)^2d \theta^2, \quad 
(t,\theta) \in (0,\infty) \times {\Sph_{\tilde{p}}^1}_. 
\end{equation}
Here $f : (0, \infty) \lra \R$ is a positive smooth function 
which is extensible to a smooth odd function around $0$, 
and 
$\Sph^{1}_{\tilde{p}} := \{ v \in T_{\tilde{p}} \wt{M} \ | \ \| v \| = 1 \}$. 
The function $G \circ \tilde{\gamma} : [0,\infty) \lra \R$ is called the 
{\em radial curvature function} of $(\wt{M}, \tilde{p})$, 
where we denote by $G$ the Gaussian curvature of $\wt{M}$, 
and by $\tilde{\gamma}$ any meridian emanating from 
$\tilde{p} = \tilde{\gamma} (0)$. 
Remark that $f$ satisfies the differential equation 
\[
f''(t) + G (\tilde{\gamma}(t)) f(t) = 0
\]
with initial conditions $f(0) = 0$ and $f'(0) = 1$. For each constant number $\delta >0$, 
a sector $\wt{V} (\delta) \subset \wt{M}$ is defined by   
\[ 
\wt{V} (\delta) : = \left\{ \tilde{x} \in \wt{M} \, | \, 0 < \theta(\tilde{x}) < \delta \right\}_.
\]
Notice that 
the $n$-dimensional model surfaces of revolution are defined similarly, and they are 
completely classified in \cite{KK}.\par 
The total curvature $c (\wt{M})$ of $(\wt{M}, \tilde{p})$ is formally 
defined as the improper integral, i.e., 
\[
c(\wt{M}):=\int _{\wt{M}} G_{+} \circ t\, d\wt{M} +\int_{\wt{M}} G_{-} \circ t \, d\wt{M}
\]
if 
\[
{
\int_{\wt{M}} G_{+}\circ t\, d\wt{M}<\infty, \quad 
{\rm or} \quad \int_{\wt{M}}G_{-}\circ t\,d\wt{M}>-\infty
}_.
\]
Here we set 
\[
G_{+} (t ):=\max\{ G(\tilde{\gamma} (t)), 0 \} 
= \frac{G + |G|}{2}
\]
and
\[
{
G_{-} (t) :=\min\{ G(\tilde{\gamma} (t)), 0 \}
 = \frac{G - |G|}{2}
}_.
\]
Notice that $G = G_{+}\circ t + G_{-}\circ t$. 
If $c(\wt{M})$ exists, $c(\wt{M}) = 2 \pi (1 - \lim_{t \to \infty} f'(t))$ holds, 
since $d\wt{M} = f dt d\theta$ and $f'(0) = 1$. 
By Theorem \ref{CVthm}, 
\[
c(\wt{M}) \le 2 \pi
\]
holds. 
Thus, 
$c (\wt{M}) > - \infty$ means that $\wt{M}$ admits a finite total curvature (if $c(\wt{M})$ exists).\par  
Let $(M,p)$ be a complete non-compact $n$-dimensional Riemannian manifold 
with a base point $p \in M$. We say that $(M, p)$ has 
{\em 
radial curvature at the base point $p$ bounded 
from below by that of 
a non-compact model surface of revolution $(\wt{M}, \tilde{p})$} 
if, along every unit speed minimal geodesic $\gamma: [0,a) \lra M$ 
emanating from $p = \gamma (0)$, 
its sectional curvature $K_M$ satisfies
\[
K_M(\sigma_{t}) \ge G (\tilde{\gamma}(t))
\]
for all $t \in [0, a)$ and all $2$-dimensional linear spaces 
$\sigma_{t}$ spanned by $\gamma'(t)$ 
and a tangent vector to $M$ at $\gamma(t)$. 
Notice that, if the Riemannian metric of $\wt{M}$ is $dt^2 + t^{2}d \theta^2$, or $dt^2 + \sinh^{2} t\,d \theta^2$, 
then $G (\tilde{\gamma}(t)) = 0$, or $G (\tilde{\gamma}(t)) = -1$, respectively.\par 
For this definition, the radial curvature geometry looks artificial, {\bf but this is not the case}, i.e., 
we can construct a model surface of revolution for 
any complete Riemannian manifold with an arbitrary given point 
as a base point (see \cite[Lemma 5.1]{KT2}). 
The existence of a $(\wt{M}, \tilde{p})$ is therefore {\em very natural} on the above definition.

\bigskip

Now, we are in a point where we will state our main theorem\,: 
Let $\cR_{M}$ denote the set of all rays on $M$ and 
$\cR_{p}$ the set of all rays emanating from $p$. 
Moreover, for each $\gamma \in \cR_{M}$, let $\Pi(\gamma)$ denote 
the set of all $\alpha \in \cR_{p}$ which is a limit ray of the sequence of 
minimal geodesic segments joining $p$ to $\gamma  (t_{i})$ for some divergent sequence $\{ t_{i} \}$. 
Hence, $\alpha \in \Pi (\gamma)$ is an asymptotic ray to $\gamma$ emanating from $p$. 
Notice that $\Pi (\gamma) = \{ \gamma \}$, if $\gamma \in \cR_{p}$.\par
We set 
\[
A_{p} := \{\gamma'(0) \in \Sph_{p}^{n - 1} \ | \ \gamma \in \cR_{p}\}, 
\]
where $\Sph^{n - 1}_{p} := \{ v \in T_{p} M \ | \ \| v \| = 1 \}$, and 
denote by $\diam (A_{p})$ the diameter of $A_{p}$. 
A subset $S$ of $A_{p}$ is said to be 
a {\em $\delta$-covering of $A_{p}$}, if 
\[
{
A_{p} \subset \bigcup_{v \in S} \ol{\B_{\delta} (v)}
}_,
\]
where
$
\ol{\B_{\delta} (v)} := 
\left\{ 
w \in \Sph_{p}^{n - 1} \ | \ \angle (v, w) \le \delta 
\right\}
$.

\bigskip

\begin{MT}
Let $(M,p)$ be a complete non-compact connected Riemannian $n$-manifold $M$ 
whose radial curvature at the base point $p$ is bounded from below by
that of a non-compact model surface of revolution $(\wt{M}, \tilde{p})$. 
Assume that 
\begin{enumerate}[{\rm ({MT--}1)}]
\item
$c(\wt{M}) > \pi$, and 
\item
$\wt{M}$ has no pair of cut points in a sector $\wt{V} (\delta_{0})$ for some $\delta_{0} \in (0, \pi]$. 
\end{enumerate}
Then, for any $\gamma_{1}, \gamma_{2}, \ldots, \gamma_{k} \in \cR_{M}$ such that 
$\{\alpha'(0) \in \Sph^{n - 1}_{p} \ | \ \alpha \in \bigcup_{i = 1}^{k} \Pi(\gamma_{i})\}$ is 
a $\delta_{0}$-covering of $A_{p}$, 
\[
\max \{F_{\gamma_{i}} \ | \ i = 1, 2, \ldots, k\}
\]
is an exhaustion. Moreover, 
if 
\[
\diam(A_{p}) \le \delta_{0},
\]
then $F_{\gamma}$ is an exhaustion for all $\gamma \in \cR_{M}$.
\end{MT}

\bigskip\noindent
The property (MT--1) does not always mean that the Gaussian curvature of $\wt{M}$ 
is non-negative everywhere. In fact, the model surface in \cite[Example 1.2]{KT1} 
satisfies both properties (MT--1) and (MT--2), 
but $\lim_{t \to \infty} G\circ \tilde{\gamma}(t) = -\infty$ 
for each meridian $\tilde{\gamma}$.\par  
If a non-compact model surface of revolution $\wt{M}$ admits a finite total curvature, 
then, for each $\ve > 0$, there exists a compact subset $\wt{K}_{\ve}$ of $\wt{M}$ such that 
\[
{
\int_{\wt{M} \setminus \wt{K}_{\ve}} |G|\, d\wt{M}<\ve
}_.
\]
Hence, we might conjecture that the Gaussian curvature of $\wt{M}$ should be almost flat outside 
of a compact subset of $\wt{M}$. The following theorem shows that this conjecture is {\bf false} 
and that the radial curvature function $G(t)$ may change signs wildly.

\begin{theorem}{\rm (\cite{KT4})}\label{thm1.1} 
Let $(\wt{M}, \tilde{p})$ be a non-compact model surface of revolution with its metric (\ref{polar}). 
If $\wt{M}$ admits 
\[
- \infty < c(\wt{M}) < 2 \pi,
\]
then, for any $\ve > 0$, 
there exists a model surface of revolution $(\wh{M}, \wh{p}\,)$ 
with its metric 
\[
\wh{g} 
= 
dt^2 + m(t)^2d \theta^2, \quad 
(t,\theta) \in (0,\infty) \times {\Sph_{\wh{p}}^1}_, 
\]
satisfying the differential equation $m''(t) + \wh{G} (t) m(t) = 0$ 
with initial conditions $m(0) = 0$ and $m'(0) = 1$, 
and admitting a finite total curvature $c(\wh{M})$ such that 
\begin{enumerate}[{\rm (1)}]
\item
$\displaystyle{\left\|\,
G(\tilde{\gamma} (t)) - \wh{G} (t)\,
\right\|_{L_{2}} \le \ve
}$, 
\item
$\displaystyle{
c(\wt{M}) \ge c(\wh{M}) \ge c(\wt{M}) - \ve
}$ 
(respectively 
$\displaystyle{
c(\wt{M}) + \ve \ge c(\wh{M}) \ge c(\wt{M})
}$),
\item
$\displaystyle{
G(\tilde{\gamma} (t)) \ge \wh{G}(t)
}$
(respectively 
$\displaystyle{
\wh{G}(t) \ge G(\tilde{\gamma} (t))
}$) on $[0, \infty)$, and 
\item
$\displaystyle{
\liminf_{t \to \infty} \wh{G} (t) = - \infty
}$
(respectively 
$\displaystyle{
\limsup_{t \to \infty} \wh{G} (t) = \infty}$).
\end{enumerate}
\end{theorem}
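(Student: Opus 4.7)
The plan is constructive: I will obtain $\wh{M}$ from $\wt{M}$ by adding a sequence of tall, thin spikes to the radial curvature $G\circ\tilde\gamma$, placed on pairwise disjoint intervals marching to infinity. The spikes are negative for the first alternative of the theorem and positive for the second, with heights that diverge while their contributions to $c$ and to the $L_{2}$-norm remain controlled.

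I first exploit the two hypotheses on $c(\wt{M})$. From $c(\wt{M})<2\pi$ one gets $\lim_{t\to\infty}f'(t)=1-c(\wt{M})/(2\pi)>0$, hence a linear lower bound $f(t)\ge c_{0}t$ for $t$ large; from $-\infty<c(\wt{M})$ together with Theorem \ref{CVthm} one has $\int_{0}^{\infty}|G(\tilde\gamma(t))|\,f(t)\,dt<\infty$, so there exists $T_{\ve}$ beyond which the tail contribution to the total curvature is under $\ve$. I then choose $T_{\ve}<t_{1}<t_{2}<\cdots\to\infty$, widths $\delta_{k}>0$ and heights $h_{k}\to\infty$ so that the intervals $I_{k}:=[t_{k},t_{k}+\delta_{k}]$ are disjoint and both $\sum_{k}h_{k}\delta_{k}f(t_{k})$ and $\sum_{k}h_{k}^{2}\delta_{k}f(t_{k})$ lie below $\ve$ (for instance $h_{k}=k$ with $\delta_{k}:=2^{-k}/[k^{2}f(t_{k})]$). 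For the first alternative I set
\[
\wh{G}(t) \;:=\; G(\tilde\gamma(t))-\sum_{k\ge 1}h_{k}\phi_{k}(t),
\]
where $\phi_{k}\ge 0$ is a smooth bump supported in $I_{k}$ of $L^{\infty}$-norm at most $1$ and $L^{1}$-norm at most $\delta_{k}$. Then $\wh{G}\le G$ and $\liminf_{t\to\infty}\wh{G}(t)=-\infty$. Solving the Jacobi problem $m''+\wh{G}m=0$, $m(0)=0$, $m'(0)=1$ and applying Sturm comparison against $f''+Gf=0$ yields $m(t)\ge f(t)>0$ on $(0,\infty)$, so $(\wh{M},\wh{p})$ really is a non-compact model surface of revolution. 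The identity $c(\wh{M})-c(\wt{M})=-2\pi\bigl(m'(\infty)-f'(\infty)\bigr)$, combined with the Wronskian relation $W(f,m)'=(G-\wh{G})fm$, converts the prescribed smallness of $\sum h_{k}\delta_{k}f(t_{k})$ and $\sum h_{k}^{2}\delta_{k}f(t_{k})$ into the required bounds on $|c(\wh{M})-c(\wt{M})|$ and on $\|G\circ\tilde\gamma-\wh{G}\|_{L_{2}}$.

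The second alternative is handled symmetrically with positive spikes added to $G$. Here is the main obstacle: positive spikes can in principle drive the new warping function $m$ to a zero, which would spoil the construction of $\wh{M}$. I would control this by tracking $W(f,m)=fm'-f'm$, which vanishes at $0$ and evolves as $W'=(G-\wh{G})fm$, and by iterating the integral equation for $m-f$ across the supports of the spikes. Combining the $L^{1}$-smallness of $(\wh{G}-G)f$ with the linear lower bound $f(t)\ge c_{0}t$ furnished by $c(\wt{M})<2\pi$ produces a quantitative estimate $|m(t)/f(t)-1|\le C\ve$ uniformly in $t$, which rules out a zero of $m$ once $\ve$ is chosen small enough. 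Once positivity of $m$ is secured, properties (1)--(4) follow exactly as in the first case.
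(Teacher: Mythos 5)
This statement is quoted in the paper from the separate reference [TK] (arXiv:1102.0852) and is given \emph{without proof} here, so there is no in-paper argument to compare yours against; I can only assess your construction on its own terms. As a strategy it is sound and, I believe, close in spirit to how such examples are built: the two hypotheses are used exactly as they must be ($c(\wt{M})<2\pi$ gives $\lim f'>0$ and hence $f(t)\ge c_{0}t$, while $c(\wt{M})>-\infty$ gives an integrable tail of $|G|f$), the Sturm/Wronskian comparison does give $m\ge f>0$ in the first alternative, and your substitution-free bookkeeping via $W(f,m)'=(G-\wh{G})fm$ (equivalently, writing $m=fv$ and integrating $(f^{2}v')'=(G-\wh{G})f^{2}v$) does convert the smallness of $\sum_{k}h_{k}\delta_{k}f(t_{k})$ into $|m/f-1|\le C\ve$ once one uses $\int_{s}^{\infty}f^{-2}\,dt\le C/s$; this also secures positivity of $m$ in the second alternative and yields $c(\wh{M})-c(\wt{M})=2\pi\,(\lim f')\,(1-\lim(m/f))$ with the correct sign in each case.

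Two points need more care than you give them. First, conclusion (4) does not follow from $h_{k}\to\infty$ alone: $G\circ\tilde\gamma$ itself may be unbounded (in either sign) near the supports $I_{k}$, so $G-h_{k}\phi_{k}$ need not be very negative at the peak of $\phi_{k}$. You must anchor each bump at a point $s_{k}$ where $G$ is under control --- such points exist in every $[n,n+1]$ for large $n$ because $\int G_{+}\,dt<\infty$ (using $f\ge c_{0}$) --- or else take $h_{k}:=k+\max_{I_{k}}G_{+}$ and shrink $\delta_{k}$ accordingly; either fix is easy but is a genuine omission. Second, the identity $c(\wh{M})=2\pi(1-\lim_{t\to\infty}m'(t))$ presupposes that $c(\wh{M})$ exists and that $\lim m'$ exists; you should verify $\int|\wh{G}|m\,dt<\infty$ (which follows from $m\le Cf$, $\int|G|f<\infty$ and $\sum h_{k}\delta_{k}f(t_{k})<\infty$) and that $fv'\to 0$, since $\sum_{k}h_{k}\delta_{k}f(t_{k})^{2}$ need \emph{not} converge for rapidly growing $t_{k}$ and the limit of $m'=f'v+fv'$ has to be extracted interval by interval. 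With these repairs the construction goes through and delivers (1)--(4).
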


\bigskip
The property (MT--2) is satisfied by a von Mangoldt surface of revolution, i.e., 
$\wt{V}(\pi)$ has no pair of cut points. In fact, it was proved in \cite{T} that 
the cut locus of a point on a von Mangoldt surface of revolution 
is empty or a subray of the meridian opposite to the point. 
The assumption (MT--2) is {\bf not strong}. For example, consider a non-compact 
model surface of revolution whose radial curvature function is non-increasing (or non-positive) 
along a {\bf subray} of a meridian. If the surface admits a finite total curvature, 
then the surface admits a sector which has no pair of cut points (see \cite[Sector Theorem]{KT2}). 
We do not know if (MT--2) can be removed from Main Theorem or not.

\bigskip

Since it is clear that $\diam (A_{p}) \le \pi$, as a corollary to Main Theorem, we get  

\begin{theorem}{\rm (\cite[Main Theorem]{KT1})}\label{thm1.4}
Let $(M,p)$ be a complete non-compact Riemannian $n$-manifold $M$ 
whose radial curvature at the base point $p$ is bounded from below by
that of a non-compact von Mangoldt surface of revolution $(M^{*}, p^{*})$.
If $c(M^{*}) > \pi$, then all Busemann functions on $M$ 
are exhaustions.
\end{theorem}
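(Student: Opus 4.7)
The plan is to derive Theorem~\ref{thm1.4} as a direct specialization of the Main Theorem with the choice $\delta_{0} = \pi$. All the essential analytic work has already been absorbed into the Main Theorem, so what remains is to verify the two hypotheses (MT--1) and (MT--2) for a von Mangoldt model surface $(M^{*}, p^{*})$, and to check the diameter condition needed to invoke the ``moreover'' clause.

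First, hypothesis (MT--1) is nothing other than the assumption $c(M^{*}) > \pi$, so it holds for free. To verify (MT--2) with $\delta_{0} = \pi$, I would appeal to the cut locus description recalled in the discussion following the statement of the Main Theorem, namely Tanaka's theorem \cite{T}: on a von Mangoldt surface of revolution, the cut locus of any point is either empty or is a subray of the meridian opposite to that point. Fixing the pole $p^{*}$ and using the polar coordinates $(t,\theta)$ of (\ref{polar}), the opposite meridian of any point $q \in \wt{V}(\pi) = \{0 < \theta < \pi\}$ lies in the complementary half $\{\pi < \theta < 2\pi\}$ and hence outside $\wt{V}(\pi)$. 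Consequently, no pair of points in $\wt{V}(\pi)$ can be mutual cut points, and (MT--2) is established with $\delta_{0} = \pi$.

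Second, I would verify the hypothesis of the ``moreover'' clause of the Main Theorem. Because $A_{p} \subset \Sph^{n-1}_{p}$ and the intrinsic angular diameter of the unit sphere is $\pi$, we automatically have $\diam(A_{p}) \le \pi = \delta_{0}$. The ``moreover'' assertion of the Main Theorem then yields that $F_{\gamma}$ is an exhaustion for every $\gamma \in \cR_{M}$, which is exactly the statement of Theorem~\ref{thm1.4}.

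Since the argument reduces to checking hypotheses, no genuine obstacle arises here; the real difficulty is hidden inside the Main Theorem itself. If (MT--2) had not been available via \cite{T} and had to be proved from scratch, that would be the principal analytic hurdle, as locating the cut locus of a surface of revolution under a monotonicity assumption on the Gaussian curvature requires a careful study of geodesic behavior on the model surface. In the present setting, however, this step is entirely encapsulated in the cited cut locus structure theorem.
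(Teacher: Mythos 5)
Your proposal is correct and matches the paper's own derivation: the paper obtains Theorem~\ref{thm1.4} precisely by specializing the Main Theorem to $\delta_{0}=\pi$, citing \cite{T} for the fact that $\wt{V}(\pi)$ on a von Mangoldt surface has no pair of cut points, and using $\diam(A_{p})\le\pi$ to invoke the final clause. No gaps.
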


\medskip\noindent 
A related result for Main Theorem is Kasue's \cite[Theorem 4.3]{K}, where he assumed 
that sectional curvature is non-negative, and he controlled diameter of 
each ideal boundary to be less than $\pi / 2$ in his sense. 

\bigskip

In the following sections, 
all geodesics will be normalized, unless otherwise stated.  

\begin{acknowledgement}
The first named author would like to express to Professor S. Ohta his deepest gratitude 
for his helpful comments on the first version of our main theorem 
in the differential topology seminar at Kyoto university, 14th July, 2009. 
\end{acknowledgement}

\section{Mass of Rays on Model Surfaces}\label{sec:mass}
This section is set up as a preliminary to the proof of Main Theorem 
(Theorem \ref{prop2009-05-08}) in the next section. 
Throughout this section, 
let $(\wt{M}, \tilde{p})$ denote a non-compact model surface of revolution which admits 
a total curvature 
$c(\wt{M}) > \pi$.

\medskip

\begin{lemma}\label{lem2009-11-12}
There exists a positive number $r_{1}$ such that 
\[
\int_{V} G \,d\wt{M} > \pi + 2 \Lambda_{0}
\]
holds for all open set $V \subset \wt{M}$ containing $B_{r_{1}} (\tilde{p})$ as a subset. 
Here we set 
\[
\Lambda_{0} := \frac{c(\wt{M}) -\pi}{3}_.
\]
\end{lemma}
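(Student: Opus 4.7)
The plan is to rewrite the target inequality so that it displays a tail of the total curvature integral. Noting that $\pi + 2\Lambda_{0} = c(\wt{M}) - \Lambda_{0}$, which is immediate from the definition of $\Lambda_{0}$, the conclusion is equivalent to finding $r_{1}$ such that every open $V \supset B_{r_{1}}(\tilde{p})$ satisfies $\int_{V} G \, d\wt{M} > c(\wt{M}) - \Lambda_{0}$, or equivalently $\int_{\wt{M}\setminus V} G \, d\wt{M} < \Lambda_{0}$.

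Before estimating the tail, I would verify that the positive part $\int_{\wt{M}} G_{+} \circ t \, d\wt{M}$ is finite. This uses Cohn--Vossen's upper bound $c(\wt{M}) \le 2\pi$ from Theorem \ref{CVthm}: combined with the existence of $c(\wt{M})$, the alternative $\int_{\wt{M}} G_{+} \circ t \, d\wt{M} = \infty$ is ruled out. Once that finiteness is in hand, absolute continuity of the finite positive measure $G_{+} \circ t \, d\wt{M}$ supplies an $r_{1} > 0$ with
\[
\int_{\wt{M}\setminus B_{r_{1}}(\tilde{p})} G_{+} \circ t \, d\wt{M} < \Lambda_{0}.
\]

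To close, I would just chain inequalities: the identity $G = G_{+}\circ t + G_{-}\circ t$ together with $G_{-}\circ t \le 0$ gives $G \le G_{+}\circ t$ pointwise, while the inclusion $\wt{M}\setminus V \subset \wt{M}\setminus B_{r_{1}}(\tilde{p})$ yields
\[
\int_{\wt{M}\setminus V} G \, d\wt{M} \le \int_{\wt{M}\setminus B_{r_{1}}(\tilde{p})} G_{+}\circ t \, d\wt{M} < \Lambda_{0}.
\]
Substituting this into $c(\wt{M}) = \int_{V} G\, d\wt{M} + \int_{\wt{M}\setminus V} G\, d\wt{M}$ produces the required lower bound. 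The argument is short and essentially routine; the only step worth care is the use of Theorem \ref{CVthm} to secure the finiteness of $\int_{\wt{M}} G_{+} \circ t \, d\wt{M}$, without which the tail estimate driving the whole proof would not be available.
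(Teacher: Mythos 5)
Your argument is correct and follows essentially the same route as the paper: both proofs choose $r_{1}$ so that the curvature integral over $\wt{M}\setminus B_{r_{1}}(\tilde{p})$ contributes less than $\Lambda_{0} = c(\wt{M}) - (\pi + 2\Lambda_{0})$, and then compare $\int_{V}G\,d\wt{M}$ with $c(\wt{M})$ via the complementary tail. The only (harmless) variation is that you control the tail of $G_{+}\circ t$ where the paper controls the tail of $|G|$; since $c(\wt{M}) > \pi$ forces both $\int_{\wt{M}}G_{+}\circ t\,d\wt{M} < \infty$ and $\int_{\wt{M}}G_{-}\circ t\,d\wt{M} > -\infty$, either choice works.
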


\begin{proof}
Since $c(\wt{M})$ is finite, for each positive number $\ve$, 
there exists a positive number $r_{\ve}$ such that 
\[
\int_{\wt{M} \setminus B_{r_{\ve}} (\tilde{p})} |G|\, d\wt{M} < \ve
\]
holds. In particular, for $\ve := \Lambda_{0}$, 
there exists a positive number $r_{1}$ such that 
\begin{equation}\label{lem2009-11-12-1}
\int_{\wt{M} \setminus B_{r_{1}} (\tilde{p})} |G|\, d\wt{M} < \Lambda_{0}
\end{equation}
Let $V \subset \wt{M}$ be an open set containing $B_{r_{1}} (\tilde{p})$ as subset. 
It is clear that 
\begin{align}\label{lem2009-11-12-2}
\int_{V} G\, d\wt{M} 
&\ge 
c(\wt{M}) - \int_{\wt{M} \setminus V} |G|\, d\wt{M}\notag \\[2mm]
&\ge 
c(\wt{M}) -  \int_{\wt{M} \setminus B_{r_{1}} (\tilde{p})} |G|\, d\wt{M}
\end{align}
By (\ref{lem2009-11-12-1}) and (\ref{lem2009-11-12-2}), we get 
\[
{
\int_{V} G \,d\wt{M} >\pi + 2 \Lambda_{0}
}_.
\]
$\qedd$
\end{proof}

Since $c(\wt{M}) > \pi$, it follows from Cohn\,-Vossen's theorem \cite[Satz 5]{CV2} that $\wt{M}$ 
has no straight line. Thus, by \cite[Lemma 6.1.1]{SST}, the next lemma is clear\,:

\begin{lemma}\label{lem2009-08-25-2}
There exists a number 
$r_{2} > r_{1}$ such that no ray emanating from a point in $\wt{M} \setminus B_{r_{2}}(\tilde{p})$
passes through $B_{r_{1}}(\tilde{p})$. 
\end{lemma}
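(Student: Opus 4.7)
The plan is to argue by contradiction. Suppose that no such $r_{2}$ exists. Then for every integer $n > r_{1}$ there is a point $q_{n} \in \wt{M}$ with $d(\tilde{p}, q_{n}) > n$ and a ray $\gamma_{n} : [0, \infty) \lra \wt{M}$ with $\gamma_{n}(0) = q_{n}$ whose image meets $B_{r_{1}}(\tilde{p})$. Pick $t_{n} > 0$ with $\gamma_{n}(t_{n}) \in \ol{B_{r_{1}}(\tilde{p})}$; by the triangle inequality
\[
t_{n} \ge d(\tilde{p}, q_{n}) - r_{1} > n - r_{1} \lra \infty.
\]

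Next, I would reparametrize to build a line. Set $\sigma_{n}(s) := \gamma_{n}(s + t_{n})$ on $[-t_{n}, \infty)$. Since $\gamma_{n}$ is a ray, for any $s_{1}, s_{2} \in [-t_{n}, \infty)$ the values $s_{i} + t_{n}$ are nonnegative, so $d(\sigma_{n}(s_{1}), \sigma_{n}(s_{2})) = |s_{1} - s_{2}|$; in particular $\sigma_{n}$ is a unit-speed minimizing geodesic on $[-t_{n}, \infty)$ with $\sigma_{n}(0) \in \ol{B_{r_{1}}(\tilde{p})}$. The set $\ol{B_{r_{1}}(\tilde{p})}$ is compact, and the unit tangent bundle over it is compact, so passing to a subsequence we may assume $\sigma_{n}(0) \to y \in \ol{B_{r_{1}}(\tilde{p})}$ and $\sigma_{n}'(0) \to v \in \Sph^{1}_{y}$. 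By continuous dependence of geodesics on initial data (Arzel\`a--Ascoli applied to the maps $\sigma_{n}$ on each compact interval), $\sigma_{n}$ converges uniformly on compact subsets of $\R$ to the unit-speed geodesic $\sigma : \R \lra \wt{M}$ with $\sigma(0) = y$ and $\sigma'(0) = v$. Taking the limit in $d(\sigma_{n}(s_{1}), \sigma_{n}(s_{2})) = |s_{1} - s_{2}|$ shows $\sigma$ is a straight line in $\wt{M}$.

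Finally, I would invoke Cohn-\,Vossen's theorem (\cite[Satz 5]{CV2}), already referenced in the paper: since $c(\wt{M}) > \pi$, the surface $\wt{M}$ contains no straight line. This contradicts the existence of $\sigma$, so the required $r_{2}$ exists; by enlarging if necessary we may certainly take $r_{2} > r_{1}$.

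The argument is essentially routine; the only mildly delicate point is producing the line in the limit. The obstacle, if any, is verifying that the $\sigma_{n}$ admit a convergent subsequence whose limit really is defined on all of $\R$ and is minimizing there --- but this is exactly the content of the standard ``limit of rays'' construction codified in \cite[Lemma 6.1.1]{SST}, so one may simply cite it rather than redo it.
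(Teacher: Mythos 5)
Your proposal is correct and follows essentially the same route as the paper: the authors likewise deduce from $c(\wt{M})>\pi$ via Cohn\,-Vossen's theorem \cite[Satz 5]{CV2} that $\wt{M}$ has no straight line, and then simply cite \cite[Lemma 6.1.1]{SST}, whose proof is exactly the limit-of-rays construction you spell out. The only difference is that you reprove that cited lemma in detail rather than quoting it.
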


\begin{lemma}\label{lem2009-08-26-1}
For each $\tilde{q} \in \wt{M} \setminus B_{r_{2}}(\tilde{p})$, 
there exists a number $r_{3} > r_{2}$ such that, 
for any $\tilde{x} \in \wt{M} \setminus B_{r_{3}}(\tilde{p})$, 
\[
{
\angle (\tilde{p}\tilde{q}\tilde{x}) \ge \frac{\pi}{2} + \Lambda_{0}
}_.
\]
Here $\angle (\tilde{p}\tilde{q}\tilde{x})$ denotes the angle at the vertex 
$\tilde{q}$ of the geodesic triangle $\triangle(\tilde{p}\tilde{q}\tilde{x})$.
\end{lemma}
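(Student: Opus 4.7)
The plan is to argue by contradiction: I suppose there is a sequence $\{\tilde{x}_{n}\}\subset\wt{M}$ with $d(\tilde{p},\tilde{x}_{n})\to\infty$ and $\angle(\tilde{p}\tilde{q}\tilde{x}_{n})<\pi/2+\Lambda_{0}$ for every $n$. Let $\tilde{\gamma}_{n}$ be the minimal geodesic from $\tilde{q}$ to $\tilde{x}_{n}$. By compactness of $\Sph^{1}_{\tilde{q}}$ I pass to a subsequence along which $\tilde{\gamma}_{n}'(0)\to v$ for some unit vector $v$; since $d(\tilde{q},\tilde{x}_{n})\to\infty$, the geodesic $\tilde{\sigma}(s):=\exp_{\tilde{q}}(sv)$ is a ray from $\tilde{q}$. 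Writing $\tilde{\mu}$ for the meridian from $\tilde{p}$ through $\tilde{q}$ and $t_{0}:=d(\tilde{p},\tilde{q})$, the limiting angle $\beta:=\angle(-\tilde{\mu}'(t_{0}),v)$ satisfies $\beta\leq\pi/2+\Lambda_{0}$.

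Since $\tilde{q}\in\wt{M}\setminus B_{r_{2}}(\tilde{p})$, Lemma \ref{lem2009-08-25-2} forces $\tilde{\sigma}$ to avoid $B_{r_{1}}(\tilde{p})$. Using the rotational symmetry of $\wt{M}$, I let $\tilde{\sigma}^{*}$ denote the reflection of $\tilde{\sigma}$ across $\tilde{\mu}$: a second ray from $\tilde{q}$ that also avoids $B_{r_{1}}(\tilde{p})$ and has initial angle $\beta$ with $-\tilde{\mu}'(t_{0})$ on the opposite side of $\tilde{\mu}$. The two rays $\tilde{\sigma}$ and $\tilde{\sigma}^{*}$ meet only at $\tilde{q}$, so they split $\wt{M}$ into two open regions; the connected ball $B_{r_{1}}(\tilde{p})$ is disjoint from both rays and contains $\tilde{p}$, hence lies entirely in the component $\Omega^{*}$ that contains $\tilde{p}$. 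Applying Lemma \ref{lem2009-11-12} with $V=\Omega^{*}$ yields
\[
\int_{\Omega^{*}}G\,d\wt{M}>\pi+2\Lambda_{0}.
\]

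Next I evaluate $\int_{\Omega^{*}}G\,d\wt{M}$ via Gauss--Bonnet applied to the capped region $\Omega^{*}_{R}:=\Omega^{*}\cap B_{R}(\tilde{p})$ and pass to the limit $R\to\infty$. The boundary of $\Omega^{*}_{R}$ has three corners: $\tilde{q}$, with interior angle $2\beta$ (since $\tilde{\sigma}$ and $\tilde{\sigma}^{*}$ each make angle $\beta$ with the direction back to $\tilde{p}$), and the two latitude-exit points, whose interior angles tend to $\pi/2$ as $R\to\infty$ because $\tilde{\sigma},\tilde{\sigma}^{*}$ become asymptotically meridional by Clairaut's relation. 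Letting $\theta_{\infty}\in[0,\pi]$ be the asymptotic angular coordinate of $\tilde{\sigma}$ seen from $\tilde{p}$, the latitude arc on the $\tilde{p}$-side has angular span tending to $2\pi-2\theta_{\infty}$ and geodesic curvature $f'(R)/f(R)$, while $1-\lim_{t\to\infty}f'(t)=c(\wt{M})/(2\pi)$. Gauss--Bonnet in the limit then gives
\[
\int_{\Omega^{*}}G\,d\wt{M}=2\beta-\frac{2\pi-c(\wt{M})}{2\pi}(2\pi-2\theta_{\infty}).
\]
Combining this with the preceding strict inequality and rearranging produces
\[
2\beta-(\pi+2\Lambda_{0})>\frac{(2\pi-c(\wt{M}))(\pi-\theta_{\infty})}{\pi},
\]
and since $c(\wt{M})\leq 2\pi$ and $\theta_{\infty}\leq\pi$ the right-hand side is non-negative, so $\beta>\pi/2+\Lambda_{0}$, contradicting $\beta\leq\pi/2+\Lambda_{0}$.

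The main obstacle I expect is the Gauss--Bonnet bookkeeping on the unbounded region $\Omega^{*}$---tracking how the latitude-curvature integral and the exit-angle contributions behave in the limit $R\to\infty$---together with justifying the bound $\theta_{\infty}\leq\pi$. For the latter, a reflection argument across meridians in the rotationally symmetric $\wt{M}$ shows that each minimizer $\tilde{\gamma}_{n}$ stays within an angular sector of width $\le\pi$ about $\tilde{\mu}$, a property inherited by the limit ray $\tilde{\sigma}$.
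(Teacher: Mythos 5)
Your strategy is at bottom the same as the paper's: produce a region bounded by rays emanating from $\tilde{q}$ that contains $B_{r_{1}}(\tilde{p})$, invoke Lemma \ref{lem2009-11-12} to get total curvature $>\pi+2\Lambda_{0}$ there, and convert that total curvature into the angle at $\tilde{q}$. The difference is in the execution. The paper takes the component $V_{\tilde{q}}$ of the complement of \emph{all} rays from $\tilde{q}$ containing $B_{r_{1}}(\tilde{p})$; since $V_{\tilde{q}}$ contains no ray from $\tilde{q}$, \cite[Lemma 6.1.3]{SST} gives $c(V_{\tilde{q}})=$ (inner angle at $\tilde{q}$) exactly, so \emph{every} ray from $\tilde{q}$ makes angle $<\pi/2-\Lambda_{0}$ with the outward meridian, and minimal segments to far points inherit this because their initial directions accumulate on ray directions. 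You instead take one limit ray $\tilde{\sigma}$ and its mirror image and redo the Gauss--Bonnet limit by hand. That is legitimate in principle (you only need the inequality $c(\Omega^{*})\le 2\beta$, which does not require $\Omega^{*}$ to be ray-free), but it means you are reproving the hard part of \cite[Lemma 6.1.3]{SST} rather than citing it, and this is exactly where your argument has a hole.

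The gap is the claim that the interior angles at the latitude-exit points tend to $\pi/2$. By Clairaut, at the exit point of $\tilde{\sigma}$ from $B_{R}(\tilde{p})$ the interior angle of $\Omega^{*}_{R}$ is $\pi/2+\psi(R)$ with $\sin\psi(R)=\nu/f(R)\ge 0$, where $\nu$ is the Clairaut constant of $\tilde{\sigma}$ (the region $\Omega^{*}$ lies on the side toward which $\tilde{\sigma}$ winds, so the correction enters with the \emph{unfavorable} sign). Passing to the limit, Gauss--Bonnet gives
\[
c(\Omega^{*})\;=\;2\beta+2\psi_{\infty}-\lim_{R\to\infty}f'(R)\,\Delta\theta(R),\qquad \psi_{\infty}=\lim_{R\to\infty}\arcsin\bigl(\nu/f(R)\bigr),
\]
so you only obtain $2\beta>\pi+2\Lambda_{0}-2\psi_{\infty}$. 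You need $\psi_{\infty}=0$, i.e.\ $f(R)\to\infty$, which is equivalent to $\lim f'>0$, i.e.\ $c(\wt{M})<2\pi$ --- and the standing hypothesis only gives $\pi<c(\wt{M})\le 2\pi$. The borderline case $c(\wt{M})=2\pi$ with $f$ bounded (e.g.\ $f=\tanh t$) is not covered; it can be rescued by observing that when $f\not\to\infty$ a non-meridian geodesic escaping to infinity winds indefinitely and hence cannot be a ray, so $\tilde{\sigma}$ must be the outward meridian and $\beta=\pi$, but you would have to say this. Two further remarks: your worry about $\theta_{\infty}\le\pi$ is a red herring, since all you need is $\liminf f'(R)\Delta\theta(R)\ge 0$, which follows from $0\le\Delta\theta(R)\le 2\pi$ and $\lim f'\ge 0$ (and the ``sector of width $\le\pi$'' reflection claim is not justified as stated); and you should fix $R$ so that the rays cross $\partial B_{R}(\tilde{p})$ exactly once, since $t\mapsto d(\tilde p,\tilde\sigma(t))$ need not be monotone.
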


\begin{proof}
Take any point $\tilde{q} \in \wt{M} \setminus B_{r_{2}}(\tilde{p})$ and fix it. 
Let $V_{\tilde{q}}$ denote the connected component of 
\[
\wt{M} \ \setminus \bigcup_{\wt{\gamma} \in \cR_{\tilde{q}}} \wt{\gamma} ([0, \infty))
\] 
containing $B_{r_{1}}(\tilde{p})$, where $\cR_{\tilde{q}}$ denotes the set of all rays emanating from 
$\tilde{q}$. 
Notice that the existence of $V_{\tilde{q}}$ is guaranteed by Lemma \ref{lem2009-08-25-2}, 
and that the boundary $\partial V_{\tilde{q}}$ consists of two rays 
$\wt{\alpha}_{+}, \wt{\alpha}_{-} \in \cR_{\tilde{q}}$, which might be the same. 
From Lemma \ref{lem2009-11-12}, 
\[
c(V_{\tilde{q}}) := \int_{V_{\tilde{q}}} G \,d\wt{M} > \pi + 2\Lambda_{0}
\]
holds. On the other hand, since $V_{\tilde{q}}$ does not admit a ray in $\cR_{\tilde{q}}$, 
it follows from \cite[Lemma 6.1.3]{SST} that 
$c(V_{\tilde{q}})$ equals the interior angle at $\tilde{q}$ of $V_{\tilde{q}}$. 
Hence, the interior angle at $\tilde{q}$ of $V_{\tilde{q}}$ is greater than $\pi$. 
Therefore, we get 
\[
\angle(\wt{\alpha}'_{+} (0), \wt{\alpha}'_{-}(0))  = 2 \pi - c(V_{\tilde{q}}) < \pi - 2 \Lambda_{0}.
\]
Since $V_{\tilde{q}}$ does not admit a ray in $\cR_{\tilde{q}}$ 
and $\wt{\alpha}_{+}$, $\wt{\alpha}_{-}$ are symmetric under the reflection with respect to 
the meridian $\mu_{\tilde{q}}$ passing through $\tilde{q}$, 
\begin{align}\label{lem2009-08-26-1-1}
\max \{ 
\angle(\wt{\gamma}'(0),  \mu'_{\tilde{q}} (d(\tilde{p}, \tilde{q})) ) \ | \ 
\wt{\gamma} \in \cR_{\tilde{q}}
\} 
&=
\angle(\wt{\alpha}'_{+} (0), \mu'_{\tilde{q}} (d(\tilde{p}, \tilde{q})) )\notag \\[2mm] 
&= 
\angle(\wt{\alpha}'_{-}(0),  \mu'_{\tilde{q}} (d(\tilde{p}, \tilde{q})) )\notag \\[2mm] 
&< 
{\frac{\pi}{2} - \Lambda_{0}}_.
\end{align}
In particular, by (\ref{lem2009-08-26-1-1}), 
\[
\angle(\wt{\gamma}'(0),  \mu'_{\tilde{q}} (d(\tilde{p}, \tilde{q})) ) < \frac{\pi}{2} - \Lambda_{0}
\]
holds for all $\wt{\gamma} \in \cR_{\tilde{q}}$.\par
Let $\wt{\alpha} : [0, d(\tilde{q}, \tilde{x})] \lra \wt{M}$ denote a minimal geodesic segment joining 
$\tilde{q}$ to a point $\tilde{x} \in \wt{M}$. 
If $d(\tilde{q}, \tilde{x})$ is sufficient large, 
then $\wt{\alpha}'(0)$ is close to some $\wt{\gamma}'(0)$, $\wt{\gamma} \in \cR_{\tilde{q}}$. 
Therefore, there exists a number $r_{3} > r_{2}$ such that, for any minimal geodesic segment 
$\wt{\alpha} : [0, d(\tilde{q}, \tilde{x})] \lra \wt{M}$ joining $\tilde{q}$ to $\tilde{x}$ with 
$d(\tilde{q}, \tilde{x}) > r_{3}$, 
\begin{equation}\label{lem2009-08-26-1-2}
{
\angle(\wt{\alpha}'(0),  \mu'_{\tilde{q}} (d(\tilde{p}, \tilde{q})) ) 
< \frac{\pi}{2} - \Lambda_{0}
}_.
\end{equation}
The equation (\ref{lem2009-08-26-1-2}) implies that 
\[
\angle (\tilde{p}\tilde{q}\tilde{x}) \ge \frac{\pi}{2} + \Lambda_{0}
\]
for all $\tilde{x} \in \wt{M} \setminus B_{r_{3}}(\tilde{p})$. 
$\qedd$
\end{proof}

\section{Proof of Main Theorem}\label{sec:ET}
Our purpose of this section is to prove Main Theorem (Theorem \ref{prop2009-05-08}). 
In the proof of the theorem, we will apply a new type of the Toponogov comparison theorem. 
The comparison theorem was established by the present authors 
as generalization of the comparison theorem in conventional comparison geometry, 
which is stated as follows\,:

\medskip

\begin{TCT}\label{TCT}\hspace{-1.5mm}{\rm (\cite[Theorem 4.12]{KT2})}\ \par
Let $(M,p)$ be a complete non-compact Riemannian manifold $M$ 
whose radial curvature at the base point $p$ is bounded from below by
that of a non-compact model surface of revolution $(\wt{M}, \tilde{p})$. 
If $(\wt{M}, \tilde{p})$ admits a sector $\wt{V}(\delta_{0})$, 
$\delta_{0} \in (0, \pi]$, having no pair of cut points, 
then, for every geodesic triangle $\triangle(pxy)$ in $(M,p)$ 
with $\angle (xpy) < \delta_{0}$, 
there exists a geodesic triangle 
$\wt{\triangle} (pxy) :=\triangle(\tilde{p}\tilde{x}\tilde{y})$ 
in $\wt{V}(\delta_{0})$ such that
\begin{equation}\label{TCT-length}
d(\tilde{p},\tilde{x})=d(p,x), \quad d(\tilde{p},\tilde{y})=d(p,y), \quad d(\tilde{x},\tilde{y})=d(x,y) 
\end{equation}
and that
\begin{equation}\label{TCT-angle}
\angle (xpy) \ge \angle (\tilde{x}\tilde{p}\tilde{y}), \quad  
\angle (pxy) \ge \angle (\tilde{p}\tilde{x}\tilde{y}), \quad
\angle (pyx) \ge \angle (\tilde{p}\tilde{y}\tilde{x}). 
\end{equation}
Here $\angle(pxy)$ denotes the angle between the minimal geodesic segments 
from $x$ to $p$ and $y$ forming the triangle $\triangle(pxy)$.
\end{TCT}

\begin{remark}
In \cite{KT3}, the present authors very recently generalized, from the radial curvature 
geometry's standpoint, the Toponogov comparison theorem to a complete Riemannian manifold 
with smooth convex boundary.
\end{remark}

\bigskip

Hereafter, let $(M,p)$ denote 
a complete non-compact Riemannian $n$-manifold $M$ whose radial curvature 
at the base point $p$ is bounded from below by
that of a non-compact model surface of revolution $(\wt{M}, \tilde{p})$ with its metric (\ref{polar}), 
$\cR_{M}$ the set of all rays on $M$, and $\cR_{p}$ the set of all rays emanating from $p$. 
Moreover, for each $\gamma \in \cR_{M}$, let $\Pi(\gamma)$ denote the set of all 
$\alpha \in \cR_{p}$ which is a limit ray of the sequence of minimal geodesic 
segments joining $p$ to $\gamma  (t_{i})$ for some divergent sequence $\{ t_{i} \}$. 
Furthermore, we assume that 
\begin{enumerate}[{\rm ({MTI--}1)}]
\item
$c(\wt{M}) > \pi$, and 
\item
$\wt{M}$ has no pair of cut points in a sector $\wt{V} (\delta_{0})$ for some $\delta_{0} \in (0, \pi]$. 
\end{enumerate}

\begin{lemma}\label{lem2009-04-26-1}
Let $\gamma \in \cR_{M}$ and $\alpha : [0, d(p, q)] \lra M$ a minimal geodesic segment 
joining $p$ to a point $q \in M \setminus B_{r_{2}} (p)$ such that 
\[
\angle (\alpha'(0), \beta_{\gamma}'(0)) < \delta_{0}
\]
for some $\beta_{\gamma} \in \Pi (\gamma)$. Then, 
\[
\angle(\sigma'(0), \alpha'(d(p, q))) \le \frac{\pi}{2} - \Lambda_{0}
\]
holds for a ray $\sigma$ emanating from $q$ asymptotic to $\gamma$. Here 
$\Lambda_{0}$ and $r_{2}$ denote the positive numbers guaranteed in Lemmas 
\ref{lem2009-11-12} and \ref{lem2009-08-25-2}, respectively.
\end{lemma}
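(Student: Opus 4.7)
The plan is to realize $\sigma$ as a limit of minimal segments $\sigma_i$ from $q$ to $\gamma(t_i)$ for a divergent sequence $t_i$, apply the New Type of Toponogov Comparison Theorem to the geodesic triangles $\triangle(pq\gamma(t_i))$, and then read off the angle estimate at $q$ from Lemma~\ref{lem2009-08-26-1} applied to the resulting comparison triangles in $\wt V(\delta_0)$. Since $\beta_{\gamma} \in \Pi(\gamma)$, by definition there exist $t_i \to \infty$ and minimal segments $\tau_i : [0, d(p, \gamma(t_i))] \lra M$ from $p$ to $\gamma(t_i)$ with $\tau_i'(0) \to \beta_{\gamma}'(0)$. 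The hypothesis $\angle(\alpha'(0), \beta_{\gamma}'(0)) < \delta_0$ then gives $\angle(\alpha'(0), \tau_i'(0)) < \delta_0$ for all large $i$; for each such $i$, pick a minimal segment $\sigma_i : [0, d(q, \gamma(t_i))] \lra M$ from $q$ to $\gamma(t_i)$, so that $\alpha$, $\tau_i$, $\sigma_i$ form a geodesic triangle $\triangle(pq\gamma(t_i))$ whose angle at $p$ is strictly less than $\delta_0$.

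The New Type of Toponogov Comparison Theorem then produces a comparison triangle $\triangle(\tilde p\,\tilde q_i\,\tilde z_i)$ in $\wt V(\delta_0)$ satisfying (\ref{TCT-length}) and (\ref{TCT-angle}); in particular $d(\tilde p, \tilde q_i) = d(p, q) > r_2$ and $\angle(pq\gamma(t_i)) \ge \angle(\tilde p\,\tilde q_i\,\tilde z_i)$. By the rotational symmetry of $\wt M$ around $\tilde p$, the radius $r_3$ provided by Lemma~\ref{lem2009-08-26-1} depends only on $d(\tilde p, \tilde q_i) = d(p, q)$ and may therefore be chosen independent of $i$. Since $d(\tilde p, \tilde z_i) = d(p, \gamma(t_i)) \to \infty$, for all sufficiently large $i$ we have $\tilde z_i \notin B_{r_3}(\tilde p)$, so Lemma~\ref{lem2009-08-26-1} gives $\angle(\tilde p\,\tilde q_i\,\tilde z_i) \ge \pi/2 + \Lambda_0$, whence $\angle(pq\gamma(t_i)) \ge \pi/2 + \Lambda_0$. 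Rewriting this in terms of the tangent vectors at $q$ via the identity $\angle(v, -w) = \pi - \angle(v, w)$ yields $\angle(\sigma_i'(0), \alpha'(d(p, q))) \le \pi/2 - \Lambda_0$.

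Finally, by the Arzel\`a--Ascoli theorem (using $d(q, \gamma(t_i)) \to \infty$), a subsequence of $\sigma_i$ converges to a ray $\sigma$ emanating from $q$, and this $\sigma$ is asymptotic to $\gamma$ by the very construction. Passing to the limit in the preceding inequality produces the desired bound $\angle(\sigma'(0), \alpha'(d(p, q))) \le \pi/2 - \Lambda_0$. The only delicate point is securing a uniform choice of $r_3$ across the varying comparison triangles $\triangle(\tilde p\,\tilde q_i\,\tilde z_i)$, and this is exactly what the rotational symmetry of the model surface of revolution resolves.
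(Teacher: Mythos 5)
Your proof is correct and follows essentially the same route as the paper: realize $\sigma$ as a limit of minimal segments $\sigma_i$ from $q$ to $\gamma(t_i)$, apply the new Toponogov comparison theorem to $\triangle(pq\gamma(t_i))$ using the limit segments $\tau_i \to \beta_\gamma$ to control the angle at $p$, and invoke Lemma~\ref{lem2009-08-26-1} to bound the comparison angle at $\tilde q$. Your explicit observation that the rotational symmetry of $\wt{M}$ makes $r_3$ depend only on $d(\tilde p, \tilde q_i) = d(p,q)$, hence uniform in $i$, is a point the paper passes over silently, but it is the same argument.
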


\begin{proof}
Since $\beta_{\gamma} \in \Pi (\gamma)$, there exists a divergent sequence $\{t_{i}\}$ 
such that the sequence of minimal geodesic segments 
$\beta_{i} : [0, d(p, \gamma (t_{i}))] \lra M$ joining $p$ to $\gamma (t_{i})$ convergent to 
$\beta_{\gamma}$. 
Since 
$\lim_{t \to 0}\angle (\beta'_{i}(0),  \beta_{\gamma}'(0)) = 0$,
there is a number $i_{0} \in \N$ such that 
\[
\angle (\beta'_{i}(0),  \alpha'(0)) < \delta_{0}
\]
for all $i \ge i_{0}$. Thus, by the new type of the Toponogov comparison theorem, 
there exists a geodesic triangle 
$\wt{\triangle} (p \gamma(t_{i}) q) \subset \wt{V}(\delta_{0})$ 
corresponding to the triangle $\triangle (p \gamma(t_{i}) q)$, $i \ge i_{0}$, 
such that (\ref{TCT-length}) holds for $x = \gamma(t_{i})$ and $y = q$, and that 
\[
\angle (- \alpha' (d(p,q)), \sigma'_{i} (0)) \ge 
\angle (\tilde{p}\tilde{q}\tilde{\gamma}(t_{i})).
\]
Here $\sigma_{i} : [0, d(q, \gamma (t_{i}))] \lra M$ denotes a minimal geodesic segment 
joining $q$ to $ \gamma(t_{i})$. 
By Lemma \ref{lem2009-08-26-1}, we get 
\[
\angle (- \alpha' (d(p,q)), \sigma'_{i} (0)) \ge \frac{\pi}{2} + \Lambda_{0}
\]
for sufficiently large $i$. Hence, 
\[
\angle (- \alpha' (d(p,q)), \sigma'(0)) \ge \frac{\pi}{2} + \Lambda_{0}
\]
where $\sigma$ denotes a limit ray of the sequence $\{\sigma_{i}\}$, 
which is asymptotic to $\gamma$.
$\qedd$
\end{proof}

Hereafter, let $F_{\gamma}$ denote a Busemann function of a $\gamma \in \cR_{M}$. 
Notice that, by the definition of $F_{\gamma}$, 
$|F_{\gamma}(x) - F_{\gamma}(y)| \le d(x, y)$ 
holds for all $x, y \in M$, i.e., $F_{\gamma}$ is Lipschitz continuous with Lipschitz constant $1$. 
Hence, $F_{\gamma}$ is differentiable except for a measure zero set. 
Moreover, we have

\begin{proposition}{\rm (\cite[Theorem 3.1]{KT1})}\label{prop2009-05-07}
Let $\gamma$ be a ray on a complete non-compact Riemannian manifold $M$. 
Then, $F_{\gamma}$ is differentiable at a point $q \in M$ 
if and only if there exists a unique ray emanating from $q$ asymptotic to $\gamma$. 
Moreover, the gradient vector of
$F_{\gamma}$ at a differentiable point $q$ equals the velocity vector of
the unique ray asymptotic to $\gamma$. 
\end{proposition}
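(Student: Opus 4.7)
My plan is to split the biconditional into two implications, both resting on one fundamental identity: if $\sigma$ is any ray emanating from $q$ that is asymptotic to $\gamma$, then
\[
F_{\gamma}(\sigma(t)) = F_{\gamma}(q) + t \quad \text{for all } t \ge 0.
\]
I would establish this first. The inequality $F_{\gamma}(\sigma(t)) - F_{\gamma}(q) \le t$ is immediate from the $1$-Lipschitz property of $F_{\gamma}$. For the reverse inequality, I would approximate $\sigma$ by the minimal geodesic segments $\sigma_{i}$ joining $q$ to $\gamma(t_{i})$ for some divergent sequence $\{t_{i}\}$, apply the triangle inequality $d(q,\gamma(t_{i})) \le d(q,\sigma(t)) + d(\sigma(t), \gamma(t_{i}))$, rearrange to get $t - d(\sigma(t), \gamma(t_{i})) \ge t_{i} - d(q, \gamma(t_{i})) - (t_{i} - t - d(q, \sigma_{i}(t)))$, and pass to the limit using that $\sigma_{i} \to \sigma$ uniformly on compact sets.

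For the direction (differentiability $\Rightarrow$ uniqueness), suppose $F_{\gamma}$ is differentiable at $q$ with gradient $X := \nabla F_{\gamma}(q)$. Because $F_{\gamma}$ is $1$-Lipschitz, $\|X\| \le 1$. For any asymptotic ray $\sigma$ from $q$, the identity above gives directional derivative $1$ in direction $\sigma'(0)$, so $\langle X, \sigma'(0)\rangle = 1$. Cauchy--Schwarz with $\|\sigma'(0)\| = 1$ and $\|X\| \le 1$ then forces $\|X\| = 1$ and $\sigma'(0) = X$. Since any asymptotic ray is a geodesic uniquely determined by its initial velocity, the asymptotic ray is unique and its initial velocity is $\nabla F_{\gamma}(q)$, proving also the formula for the gradient.

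For the harder direction (uniqueness $\Rightarrow$ differentiability), let $\sigma$ be the unique asymptotic ray from $q$, set $v := \sigma'(0)$, and fix any $w \in T_{q}M$. Consider the curve $c(s) := \exp_{q}(sw)$. I would express
\[
F_{\gamma}(c(s)) - F_{\gamma}(q) = \lim_{t \to \infty}\bigl[d(q,\gamma(t)) - d(c(s), \gamma(t))\bigr]
\]
and, for each large $t$, apply the first variation of arc length to the minimal geodesic $\beta_{t}$ from $q$ to $\gamma(t)$ and to a minimal geodesic from $c(s)$ to $\gamma(t)$. The key observation is that uniqueness of the asymptotic ray forces the \emph{entire} family $\{\beta_{t}\}_{t}$ (not just a subsequence) to converge to $\sigma$ as $t \to \infty$: any limit ray of $\{\beta_{t}\}$ is asymptotic to $\gamma$, hence equal to $\sigma$, and a compactness argument on the unit sphere $\Sph^{n-1}_{q}$ upgrades subsequential convergence to full convergence of initial velocities. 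Combining this with the first variation, I would deduce
\[
F_{\gamma}(c(s)) - F_{\gamma}(q) = s\,\langle v, w\rangle + o(s),
\]
which is differentiability at $q$ with gradient $v = \sigma'(0)$.

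The main obstacle will be the uniform control needed in the last step: the first variation formula gives information at a single parameter $t$, while the Busemann function involves the limit $t \to \infty$, so I need to justify exchanging the limit in $t$ with the limit in $s$. The way around this is to combine the clean upper bound $F_{\gamma}(c(s)) - F_{\gamma}(q) \le s$ (from $1$-Lipschitz continuity) with a matching lower bound obtained by evaluating $F_{\gamma}$ along the unique asymptotic ray from $c(s)$, which must converge to $\sigma$ as $s \to 0$ by the uniqueness hypothesis and a limit-ray argument.
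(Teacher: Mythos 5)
This paper does not prove Proposition \ref{prop2009-05-07}; it quotes it from \cite[Theorem 3.1]{KT1}, so your proposal is being measured against the standard argument rather than a proof in this text. Your preliminary identity $F_{\gamma}(\sigma(t)) = F_{\gamma}(q) + t$, the implication (differentiability $\Rightarrow$ uniqueness) via Cauchy--Schwarz, and the gradient formula are all correct and are exactly the standard route. The gap is in the converse, at the point you yourself flag. Differentiability at $q$ requires $F_{\gamma}(\exp_{q}(sw)) - F_{\gamma}(q) = s\langle v, w\rangle + o(s)$ for \emph{every} $w \in T_{q}M$. Your lower bound is sound: the identity plus the $1$-Lipschitz property give the support inequality $F_{\gamma}(x) \ge F_{\gamma}(q) + 1 - d(x,\sigma(1))$, and expanding $-d(\cdot,\sigma(1))$ at $q$ (legitimate since $q \notin \Cut(\sigma(1))$) yields the estimate with the parameter on the ray fixed at $1$, so no limit interchange is needed. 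But your proposed upper bound, the Lipschitz estimate $F_{\gamma}(c(s)) - F_{\gamma}(q) \le s$, matches the lower bound only in the single direction $w = v$; for $w \ne v$ (say $w = -v$) you need $\le s\langle v,w\rangle + o(s)$, which is strictly stronger than Lipschitz continuity and is precisely where uniqueness must enter. The first-variation-at-$\gamma(t)$ route does not repair this either: the second-order error in expanding $d(\cdot,\gamma(t))$ near $q$ need not be uniform in $t$ on a general complete manifold (no curvature bounds are assumed in this proposition), so the interchange of $t \to \infty$ with $s \to 0$ remains unjustified.

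The repair is the mirror image of your lower bound, and you already name its key ingredient (convergence of the asymptotic rays from $c(s)$ to $\sigma$). Let $\sigma_{s}$ be an asymptotic ray emanating from $c(s)$. Applying the same support inequality at $c(s)$ gives $F_{\gamma}(q) \ge F_{\gamma}(c(s)) + 1 - d(q, \sigma_{s}(1))$, hence $F_{\gamma}(c(s)) - F_{\gamma}(q) \le d(q,\sigma_{s}(1)) - 1$. By uniqueness, any limit of the $\sigma_{s}$ as $s \to 0$ is an asymptotic ray from $q$ and therefore equals $\sigma$, so $\sigma_{s}'(0) \to v$ and $\sigma_{s}(1) \to \sigma(1)$; the Hessians of $d(\cdot,\sigma_{s}(1))$ are then uniformly bounded near $q$ for small $s$, and the first variation gives $d(q,\sigma_{s}(1)) - 1 = s\langle v, w\rangle + o(s)$. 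This produces the missing upper estimate with a fixed finite parameter on the ray and no exchange of limits. With that substitution in place of the "clean upper bound $\le s$", your argument is complete.
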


\begin{lemma}\label{lem2009-05-07-1}
Let $\gamma \in \cR_{M}$ and $\alpha : [0, d(p, q)] \lra M$ a minimal geodesic segment 
joining $p$ to a point $q \in M \setminus B_{r_{2}} (p)$ such that 
\[
\angle (\alpha'(0), \beta_{\gamma}'(0)) < \delta_{0}
\]
for some $\beta_{\gamma} \in \Pi (\gamma)$. 
If $F_\gamma$ is differentiable at $\alpha(t)$ for almost all $t \in (a, b) \subset (r_{2}, d(p, q)]$, 
then
\[
F_\gamma(\alpha(b))-F_\gamma(\alpha(a)) \ge (b - a) \sin \Lambda_{0}.
\]
\end{lemma}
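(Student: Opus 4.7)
The plan is to combine the geometric angle estimate from Lemma \ref{lem2009-04-26-1} with the gradient formula from Proposition \ref{prop2009-05-07}, and then integrate along $\alpha$.

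\textbf{Step 1 (Angle bound along the interior of $\alpha$).} For each $t \in (r_{2}, d(p,q)]$, I would restrict $\alpha$ to $[0,t]$. This is still a minimal geodesic segment from $p$ to the point $\alpha(t) \in M \setminus B_{r_{2}}(p)$, with the \emph{same} initial velocity $\alpha'(0)$; hence the hypothesis $\angle(\alpha'(0),\beta_\gamma'(0)) < \delta_{0}$ of Lemma \ref{lem2009-04-26-1} continues to hold. Applying that lemma with $q$ replaced by $\alpha(t)$, I obtain
\[
\angle\bigl(\sigma'(0),\,\alpha'(t)\bigr) \le \frac{\pi}{2} - \Lambda_{0}
\]
for every ray $\sigma$ emanating from $\alpha(t)$ asymptotic to $\gamma$.

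\textbf{Step 2 (Derivative of the Busemann function along $\alpha$).} At each $t \in (a,b)$ where $F_{\gamma}$ is differentiable at $\alpha(t)$, Proposition \ref{prop2009-05-07} guarantees a \emph{unique} asymptotic ray $\sigma_{t}$ from $\alpha(t)$ to $\gamma$ and identifies $\nabla F_{\gamma}(\alpha(t)) = \sigma_{t}'(0)$. Combining with Step 1,
\[
(F_{\gamma}\circ\alpha)'(t) = \bigl\langle \nabla F_{\gamma}(\alpha(t)),\,\alpha'(t)\bigr\rangle = \cos\bigl(\angle(\sigma_{t}'(0),\alpha'(t))\bigr) \ge \cos\!\Bigl(\frac{\pi}{2}-\Lambda_{0}\Bigr) = \sin\Lambda_{0}
\]
at almost every $t \in (a,b)$.

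\textbf{Step 3 (Integration).} Since $F_{\gamma}$ is $1$-Lipschitz on $M$ and $\alpha$ is a unit speed geodesic, the composition $F_{\gamma}\circ\alpha$ is $1$-Lipschitz on $[0,d(p,q)]$, hence absolutely continuous. The fundamental theorem of calculus for absolutely continuous functions then yields
\[
F_{\gamma}(\alpha(b)) - F_{\gamma}(\alpha(a)) = \int_{a}^{b} (F_{\gamma}\circ\alpha)'(t)\,dt \ge (b-a)\sin\Lambda_{0},
\]
as claimed.

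The only subtle point is the justification that Lemma \ref{lem2009-04-26-1} indeed applies to the restricted segment $\alpha|_{[0,t]}$ for every $t \in (a,b)$; but this is immediate since both the base point $p$ and the initial velocity $\alpha'(0)$ — the two pieces of data entering the lemma's hypothesis — are preserved under restriction, and $\alpha(t) \in M \setminus B_{r_{2}}(p)$ because $t > r_{2}$. Everything else is routine: the angle estimate converts into a pointwise lower bound on the directional derivative of $F_{\gamma}$ via the gradient formula, and absolute continuity turns this into an integrated inequality.
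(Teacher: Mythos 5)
Your proof is correct and follows essentially the same route as the paper: the angle bound from Lemma \ref{lem2009-04-26-1} applied at each $\alpha(t)$, the gradient formula of Proposition \ref{prop2009-05-07} to get $\frac{d}{dt}F_{\gamma}(\alpha(t)) \ge \sin\Lambda_{0}$ a.e., and then integration (the paper invokes Dini's theorem where you invoke absolute continuity of the Lipschitz function $F_{\gamma}\circ\alpha$, which amounts to the same thing). Your explicit justification that the lemma applies to the restriction $\alpha|_{[0,t]}$ is a point the paper leaves implicit, and it is handled correctly.
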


\begin{proof}
Assume that $F_\gamma$ is differentiable at $\alpha(t_{0})$, $t_{0} \in (a, b)$. 
By Lemma \ref{lem2009-04-26-1} and Proposition \ref{prop2009-05-07}, 
we get 
\[
\angle ((\nabla F_\gamma)_{\alpha(t_{0})}, \alpha'(t_{0})) 
\le \frac{\pi}{2} - \Lambda_{0}
\]
Hence, for almost all $t\in(a, b)$, 
\[
\frac{d}{dt}F_{\gamma}(\alpha(t)) 
= \langle (\nabla F_\gamma)_{\alpha(t)}, \alpha'(t) \rangle 
= \cos \left( \angle ((\nabla F_\gamma)_{\alpha(t)}, \alpha'(t)) \right) 
\ge \sin \Lambda_{0}
\]
It follows from Dini's theorem \cite{D} 
(cf.\,\cite[Section 2.3]{H}, \cite[Theorem 7.29]{WZ}) that  
\[
F_\gamma(\alpha(b))-F_\gamma(\alpha(a)) 
= \int^{b}_{a} \frac{d}{dt}F_{\gamma}(\alpha(t)) \, dt 
\ge (b -a) \sin \Lambda_{0}. 
\]
$\qedd$
\end{proof}

\begin{lemma}\label{lem2009-05-07-2}
Let $\gamma \in \cR_{M}$ and $\alpha : [0, d(p, q)] \lra M$ a minimal geodesic segment 
joining $p$ to a point $q \in M \setminus B_{r_{2}} (p)$ such that 
\[
\angle (\alpha'(0), \beta_{\gamma}'(0)) \le \delta_{0}
\]
for some $\beta_{\gamma} \in \Pi (\gamma)$. 
Then, 
\begin{equation}\label{lem2009-05-07-2-A}
F_\gamma(q)-F_\gamma(\alpha(r_{2})) \ge \left( d(p,q) - r_{2} \right) \sin \Lambda_{0}
\end{equation}
holds. 
\end{lemma}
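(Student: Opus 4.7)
My plan is to prove Lemma \ref{lem2009-05-07-2} by coupling the pointwise angle bound of Lemma \ref{lem2009-04-26-1} with a first-variation computation, which allows me to sidestep the a.e.\ differentiability hypothesis required by Lemma \ref{lem2009-05-07-1}.

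Setting $h := F_\gamma \circ \alpha$, the function $h$ is $1$-Lipschitz on $[r_2, d(p, q)]$ because $F_\gamma$ is $1$-Lipschitz, hence differentiable almost everywhere. By Dini's theorem (as in the proof of Lemma \ref{lem2009-05-07-1}),
\[
F_\gamma(q) - F_\gamma(\alpha(r_2)) = \int_{r_2}^{d(p, q)} h'(t)\, dt,
\]
so it suffices to show $h'(t) \ge \sin \Lambda_0$ for almost every $t \in (r_2, d(p, q))$.

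To prove this pointwise bound, fix such a $t$ and apply Lemma \ref{lem2009-04-26-1} to the minimal subsegment $\alpha|_{[0, t]}$, which is a minimal geodesic from $p$ to $\alpha(t) \in M \setminus B_{r_2}(p)$ with the same initial direction $\alpha'(0)$, and hence still satisfies the angle hypothesis at $p$. This yields a ray $\sigma$ from $\alpha(t)$ asymptotic to $\gamma$ with $\theta := \angle(\sigma'(0), \alpha'(t)) \le \pi/2 - \Lambda_0$. Using the standard Busemann identity $F_\gamma(\sigma(u)) = F_\gamma(\alpha(t)) + u$ along an asymptotic ray together with the $1$-Lipschitz property of $F_\gamma$, for any fixed $u > 0$ and small $s > 0$ I obtain
\[
F_\gamma(\alpha(t + s)) - F_\gamma(\alpha(t)) \ge F_\gamma(\sigma(u)) - F_\gamma(\alpha(t)) - d(\alpha(t + s), \sigma(u)) = u - d(\alpha(t + s), \sigma(u)).
\]
Since $\sigma|_{[0, u]}$ is minimal from $\alpha(t)$ to $\sigma(u)$, first variation of arclength along $\alpha$ at $s = 0$ gives $d(\alpha(t + s), \sigma(u)) = u - s \cos \theta + O(s^2)$; dividing by $s$ and letting $s \to 0^+$ yields $h'(t) \ge \cos \theta \ge \sin \Lambda_0$, as desired.

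The main obstacle is that Lemma \ref{lem2009-04-26-1} as stated requires the strict inequality $\angle(\alpha'(0), \beta_\gamma'(0)) < \delta_0$, while Lemma \ref{lem2009-05-07-2} permits equality. I would handle this by an approximation argument: rotate $\alpha'(0)$ slightly toward $\beta_\gamma'(0)$ within the unit tangent sphere at $p$ to obtain unit vectors $v_n \to \alpha'(0)$ with $\angle(v_n, \beta_\gamma'(0)) < \delta_0$; form minimal geodesic segments $\alpha_n$ with $\alpha_n'(0) = v_n$ up to endpoints $q_n \to q$, truncating just before the first cut value of $v_n$ if $q$ is itself a cut point of $p$; apply the strict-inequality argument above to each $\alpha_n$; and pass to the limit using the continuity of $F_\gamma$ and $\exp_p$.
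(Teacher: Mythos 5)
Your proof is correct, but it reaches the key derivative estimate by a genuinely different route than the paper. The paper's proof goes through Lemma \ref{lem2009-05-07-1}, which needs $F_\gamma$ (as a function on $M$) to be differentiable at $\alpha(t)$ for a.e.\ $t$ so that Proposition \ref{prop2009-05-07} can identify $\nabla F_\gamma$ with the velocity of the unique asymptotic ray; since a fixed $\alpha$ need not satisfy this, the authors invoke Rademacher's theorem in geodesic polar coordinates around $\alpha'(0)$ (using that $\alpha(t_0)$ is not a cut point of $p$) and a Fubini-type slicing argument to produce approximating minimal geodesics $\alpha_j\to\alpha$ along which the hypothesis of Lemma \ref{lem2009-05-07-1} does hold, and then pass to the limit. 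You instead work with the one-variable Lipschitz function $h=F_\gamma\circ\alpha$, which is automatically absolutely continuous, and bound its derivative at \emph{every} parameter via the standard lower support function $x\mapsto F_\gamma(\alpha(t))+u-d(x,\sigma(u))$ built from the asymptotic ray $\sigma$ supplied by Lemma \ref{lem2009-04-26-1} applied to $\alpha|_{[0,t]}$; this needs no differentiability of $F_\gamma$ at $\alpha(t)$ and so eliminates Lemma \ref{lem2009-05-07-1}, Proposition \ref{prop2009-05-07}, and the Rademacher/Fubini step entirely. Your treatment of the boundary case $\angle(\alpha'(0),\beta_\gamma'(0))=\delta_0$ coincides with the paper's limit argument (and you are more explicit about how to manufacture the approximating segments, correctly handling the cut value). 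Two small points to tidy: the first-variation statement should be the one-sided estimate $d(\alpha(t+s),\sigma(u))\le u-s\cos\theta+O(s^2)$ (equality can fail at cut points, but the inequality is exactly the direction you use); and you should record that the identity $F_\gamma(\sigma(u))=F_\gamma(\sigma(0))+u$ holds because $\sigma$, as a limit of minimal segments to $\gamma(t_i)$, is a co-ray to $\gamma$ --- a standard fact, consistent with how $\Pi(\gamma)$ and asymptotic rays are defined in this paper.
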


\begin{proof} 
First, we will prove (\ref{lem2009-05-07-2-A}) under the assumption that 
\[
\angle (\alpha'(0), \beta_{\gamma}'(0)) < \delta_{0}.
\]
The general case will be completed by the limit argument. 
If we prove that, for each $t_0 \in(r_{2}, d(p, q))$, 
there exists a number $\ve_0 > 0$ such that
\begin{equation}\label{lem2009-05-07-2-1}
F_{\gamma}(\alpha(t))-F_{\gamma}(\alpha(s)) \geq (t - s ) \sin \Lambda_{0}
\end{equation}
holds for all $s, t \in (t_{0} - \ve_{0}, t_{0} + \ve_{0})$ with $s < t$, 
then the equation (\ref{lem2009-05-07-2-A}) is clear.\par
Take any $t_{0} \in (r_{1}, d(p, q))$, and fix it. 
Since $\alpha$ is minimal on $[0, d(p, q)]$, 
$\alpha(t_0)$ is not a cut point of $p = \alpha(0)$. 
Hence, there exist an open neighborhood $\cU \subset \Sph^{n - 1}_{p}$ around $\alpha' (0)$, 
an open neighborhood $U$ around $\alpha(t_0)$, and 
an open interval $(t_0 - \ve_0, t_0 + \ve_0)$ such that
$\cU \times (t_0 - \ve_0, t_0 + \ve_0)$ is diffeomorphic to $U$ 
by a map $\varphi$, where $\varphi^{-1}(v, t) := \exp_{p} (t v)$.  
Since $F_\gamma \circ \varphi^{-1}$
is Lipschitz, it follows from  Rademacher's theorem (cf.\,\cite{Mo}) 
that there exists a set $\cE \subset T_{p}M$ of Lebesgue measure zero such that 
$F_\gamma \circ \varphi^{-1}$ is differentiable on 
$\left( \cU \times (t_0 - \ve_0, t_0 + \ve_0) \right) \setminus \cE$. 
Moreover,
for each $v \in \cU$, we set 
\[
\cE_{v} := \{ t \in (t_0 - \ve_0, t_0 + \ve_0) \, | \, (v, t) \in \cE \}_.
\]
Remark that the set $\cE_{v} $ has also 
Lebesgue measure zero for all most all $v \in \cU$ (cf.\,\cite[Lemma 6.5]{WZ}). 
Thus, we may find a sequence $\{ \alpha_{j} \}$ of minimal geodesic segments emanating
from $p$ converging to $\alpha$ such that 
each $F_{\gamma}$ is differentiable at $\alpha_{j} (t)$ for almost all  
$t \in (t_{0} - \ve_0, t_{0} + \ve_0)$. 
By Lemmas \ref{lem2009-04-26-1} and \ref{lem2009-05-07-1}, for each $j \in\N$, 
\[
F_{\gamma}(\alpha_{j} (t) ) - F_{\gamma}(\alpha_{j} (s) ) \ge ( t - s ) \sin \Lambda_{0}
\]
holds for all $s, t \in (t_{0} - \ve_0, t_{0} + \ve_0)$ with $s < t$.
Then, by taking the limit, we get (\ref{lem2009-05-07-2-1}).\par

Assume that 
\[
\angle (\alpha'(0), \beta_{\gamma}'(0)) = \delta_{0}.
\]
It is clear that there exists a sequence $\{\alpha_{i} : [0, \ell_{i}] \lra M \}$ of 
minimal geodesic segments $\alpha_{i}$ emanating from $p = \alpha_{i} (0)$ convergent to $\alpha$ 
such that 
$\angle (\alpha_{i}'(0), \beta_{\gamma}'(0)) < \delta_{0}$ for each $i \in \N$. 
From the argument above, 
\[
F_{\gamma}(\alpha_{i} (\ell_{i}) ) - F_{\gamma}(\alpha_{i} (r_{2}) ) \ge ( \ell_{i} - r_{2} ) \sin \Lambda_{0}
\]
By taking the limit, we get (\ref{lem2009-05-07-2-A}).
$\qedd$
\end{proof}

Set 
\[
A_{p} := \{\gamma'(0) \in \Sph_{p}^{n - 1} \ | \ \gamma \in \cR_{p}\},
\]
and denote by $\diam (A_{p})$ the diameter of $A_{p}$. 
Then, we have our main theorem in this article\,:

\begin{theorem}\label{prop2009-05-08}
For any $\gamma_{1}, \gamma_{2}, \ldots, \gamma_{k} \in \cR_{M}$ such that 
$\{\alpha'(0) \in \Sph^{n - 1}_{p} \ | \ \alpha \in \bigcup_{i = 1}^{k} \Pi(\gamma_{i})\}$ is 
a $\delta_{0}$-covering of $A_{p}$,
\[
\max \{F_{\gamma_{i}} \ | \ i = 1, 2, \ldots, k\}
\]
is an exhaustion. Moreover, 
if $\diam(A_{p}) \le \delta_{0}$, or $\delta_{0} = \pi$, 
then $F_{\gamma}$ is an exhaustion for all $\gamma \in \cR_{M}$.
\end{theorem}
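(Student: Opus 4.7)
The second assertion reduces to the first: if $\diam(A_p)\le\delta_0$ (in particular when $\delta_0=\pi$, since $\diam(A_p)\le\pi$ always holds), then for any $\gamma\in\cR_M$ and any $\beta\in\Pi(\gamma)\subseteq\cR_p$ one has $\beta'(0)\in A_p$, so $\angle(u,\beta'(0))\le\diam(A_p)\le\delta_0$ for every $u\in A_p$. Thus $\{\beta'(0)\}$ alone $\delta_0$-covers $A_p$, and applying the first assertion with $k=1$, $\gamma_1=\gamma$ gives that $F_\gamma$ is an exhaustion.

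For the first assertion the plan is to argue by contradiction. Suppose $\max_i F_{\gamma_i}$ is not an exhaustion; then there exist $a\in\R$ and a sequence $\{q_j\}\subset M$ with $d(p,q_j)\to\infty$ and $F_{\gamma_i}(q_j)\le a$ for every $i,j$. Choose minimal unit-speed geodesics $\alpha_j\colon[0,d(p,q_j)]\to M$ from $p$ to $q_j$. By compactness of $\Sph_p^{n-1}$, after passing to a subsequence, $\alpha_j'(0)\to v\in\Sph_p^{n-1}$, and $\alpha_j$ converges uniformly on compact intervals to the geodesic $\alpha_\infty(t):=\exp_p(tv)$. Because $d(p,q_j)\to\infty$, the limit $\alpha_\infty$ is a ray from $p$ (the usual argument: $d(p,\alpha_\infty(T))=\lim d(p,\alpha_j(T))=T$ for every $T$), and so $v\in A_p$.

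The $\delta_0$-covering hypothesis now supplies an index $i_0$ and some $\beta_*\in\Pi(\gamma_{i_0})$ with $\angle(v,\beta_*'(0))\le\delta_0$. For $j$ large, continuity of the angle gives $\angle(\alpha_j'(0),\beta_*'(0))\le\delta_0$, so Lemma \ref{lem2009-05-07-2} applied to $\alpha_j$ with $\gamma=\gamma_{i_0}$ yields
\[
F_{\gamma_{i_0}}(q_j)\ge (d(p,q_j)-r_2)\sin\Lambda_0+F_{\gamma_{i_0}}(\alpha_j(r_2)).
\]
Since $\alpha_j(r_2)$ lies in the compact set $\overline{B_{r_2}(p)}$, on which the $1$-Lipschitz function $F_{\gamma_{i_0}}$ is bounded, the right-hand side diverges to $+\infty$, contradicting $F_{\gamma_{i_0}}(q_j)\le a$.

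The step I expect to demand the most care is justifying $\angle(\alpha_j'(0),\beta_*'(0))\le\delta_0$ for large $j$ in the borderline case $\angle(v,\beta_*'(0))=\delta_0$, where the sequence $\alpha_j'(0)$ might approach $v$ from outside the closed spherical ball $\overline{\B_{\delta_0}(\beta_*'(0))}$. My intended workaround is to apply Lemma \ref{lem2009-05-07-2} instead to the restrictions $\alpha_\infty|_{[0,t]}$ (for which the initial-angle condition holds with equality) to obtain $F_{\gamma_{i_0}}(\alpha_\infty(t))\ge(t-r_2)\sin\Lambda_0+O(1)\to\infty$, and then to promote this to $F_{\gamma_{i_0}}(q_j)\to\infty$ by a diagonal choice of $T_j\le d(p,q_j)$ with $T_j\to\infty$, combining the $1$-Lipschitz estimate
\[
F_{\gamma_{i_0}}(q_j)\ge F_{\gamma_{i_0}}(\alpha_\infty(T_j))-(d(p,q_j)-T_j)-d(\alpha_j(T_j),\alpha_\infty(T_j))
\]
with the uniform convergence $\alpha_j(T_j)\to\alpha_\infty(T_j)$ on compact intervals. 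Tuning $T_j$ so that the linear growth $\sin\Lambda_0\cdot T_j$ dominates both the defect $(d(p,q_j)-T_j)$ and the (possibly unbounded) convergence error $d(\alpha_j(T_j),\alpha_\infty(T_j))$, when only curvature lower bounds are available, is the delicate technical point.
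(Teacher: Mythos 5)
Your reduction of the second assertion to the first, and your main line for the first assertion (contradiction, a divergent sequence $q_j$ in a sublevel set, minimal segments $\alpha_j$, convergence $\alpha_j'(0)\to v\in A_p$, the covering hypothesis producing $i_0$ and $\beta_*\in\Pi(\gamma_{i_0})$, then Lemma \ref{lem2009-05-07-2} plus the $1$-Lipschitz bound on $\overline{B_{r_2}(p)}$) coincide with the paper's proof. The paper phrases the key step slightly differently: it passes to a subsequence and asserts that $\angle(\alpha_j'(0),\beta_{\gamma_j}'(0))\le\delta_0$ holds for each large $j$ with a comparison ray $\beta_{\gamma_j}\in\Pi(\gamma_{i_0})$ that is allowed to depend on $j$; the non-strict inequality is exactly what Lemma \ref{lem2009-05-07-2} was set up to accept. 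The paper gives no further detail on the borderline situation you isolate, so in the generic case $\angle(v,\beta_*'(0))<\delta_0$ your argument is complete and identical to theirs.

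The workaround you propose for the case $\angle(v,\beta_*'(0))=\delta_0$ does not close, and this is a genuine gap. Your transfer inequality requires
\[
(T_j-r_2)\sin\Lambda_0-\bigl(d(p,q_j)-T_j\bigr)-d\bigl(\alpha_j(T_j),\alpha_\infty(T_j)\bigr)\longrightarrow+\infty .
\]
Since $c(\wt M)\le 2\pi$ forces $\Lambda_0\le\pi/3$, hence $\sin\Lambda_0<1$, the first two terms alone force $T_j\ge d(p,q_j)/(1+\sin\Lambda_0)+\omega(1)$, i.e.\ $T_j$ must grow linearly in $d(p,q_j)$. But uniform convergence of $\alpha_j$ to $\alpha_\infty$ holds only on compact parameter intervals, and with only a lower radial curvature bound (which, as Theorem \ref{thm1.1} shows, permits $G\to-\infty$) there is no estimate on $d(\alpha_j(T_j),\alpha_\infty(T_j))$ for $T_j\to\infty$ beyond the useless $2T_j$; geodesics from $p$ with small initial angle may spread super-linearly. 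So the error term cannot be dominated, and no choice of $T_j$ makes the diagonal argument work. To repair the borderline case you should instead arrange the hypothesis of Lemma \ref{lem2009-05-07-2} directly for the segments $\alpha_j$ themselves — for instance by exploiting, as the paper implicitly does, the freedom to choose a different $\beta_{\gamma_j}\in\Pi(\gamma_{i_0})$ for each $j$, or by rerunning the approximation argument inside the proof of Lemma \ref{lem2009-05-07-2} — rather than by proving the estimate on the limit ray $\alpha_\infty$ and trying to push it back to $q_j$ with Lipschitz bounds.
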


\begin{proof}
Suppose that $\max \{F_{\gamma_{i}} \ | \ i = 1, 2, \ldots, k\}$ is not an exhaustion, i.e., 
for some $a \in \R$, 
\[
X := \bigcap_{i = 1}^{k} F_{\gamma_{i}}^{-1} (-\infty, a]
\]
is non-compact. 
Hence, there exists a sequence $\{q_{j}\}$ of points $q_{j} \in X$ such that 
\[
{
\lim_{j \to \infty} d(p, q_{j}) = \infty
}_.
\]
Let $\alpha_{j} : [0, d(p, q_{j})] \lra M$ denote a minimal geodesic segment joining 
$p$ to $q_{j}$. 
Since $\lim_{j \to \infty} d(p, q_{j}) = \infty$, there exists a number $j_{0} \in \N$ such that 
\[
r_{2} < d(p, q_{j})
\]
for all $j \ge j_{0}$. 
Furthermore, by choosing an infinite subsequence of $\{\alpha_{j}\}$, 
we may assume that there exist $i_{0} \in \{1, 2, \ldots, k\}$ such that, 
for each $j \ge j_{0}$, 
\[
\angle (\alpha'_{j} (0), \beta_{\gamma_{j}}'(0) ) \le \delta_{0}
\]
holds for some $\beta_{\gamma_{j}} \in \Pi (\gamma_{i_{0}})$. 
It follows from Lemma \ref{lem2009-05-07-2} that 
\[
F_{\gamma_{i_{0}}} (q_{j}) - F_{\gamma_{i_{0}}} (\alpha_{j}(r_{2})) 
\ge \left( d(p,q_{j}) - r_{2} \right) \sin \Lambda_{0}
\]
for all $j \ge j_{0}$. 
Since $q_{j} \in F_{\gamma_{i_{0}}}^{-1} (-\infty, a]$ for all $j \ge j_{0}$, 
\[
a - F_{\gamma_{i_{0}}} (\alpha_{j}(r_{2})) 
\ge \left( d(p,q_{j}) - r_{2} \right) \sin \Lambda_{0}.
\] 
Since $\lim_{j \to \infty} d(p, q_{j}) = \infty$, we have 
$\lim_{j \to \infty} F_{\gamma_{i_{0}}} (\alpha_{j}(r_{2})) =  - \infty$. 
This is impossible, since 
$|F_{\gamma_{i_{0}}} (p) - F_{\gamma_{i_{0}}} (\alpha_{j}(r_{2}))| \le d(p, \alpha_{j}(r_{2})) = r_{2}$ 
for all $j \ge j_{0}$. 
Therefore, 
$\max \{F_{\gamma_{i}} \ | \ i = 1, 2, \ldots, k\}$ is an exhaustion.\par
Next, we will prove the second claim. Assume that $\diam(A_{p}) \le \delta_{0}$. 
Since $\angle (v, w) \le \delta_{0}$ for all $v, w \in A_{p}$, it is clear that $\{v\}$ is 
a $\delta_{0}$-covering of $A_{p}$ for each $v \in A_{p}$. 
Hence, for each $\gamma \in \cR_{M}$, 
$\{\alpha'(0) \in \Sph^{n - 1}_{p} \ | \ \alpha \in \Pi(\gamma)\}$ is 
a $\delta_{0}$-covering of $A_{p}$. 
From the argument above, 
this implies that $F_{\gamma}$ 
is an exhaustion for all $\gamma \in \cR_{M}$. If $\delta_{0} = \pi$, then the claim is clear, 
since $\diam(A_{p}) \le \pi$.
$\qedd$
\end{proof}

From the same argument in \cite[Section 4.2]{KT1}, we get

\begin{corollary}\label{cor2009-05-08-2}
The isometry group $I(M)$ of $M$ is compact, 
if $\diam (A_{p}) \le \delta_{0}$, or $\delta_{0} = \pi$. 
\end{corollary}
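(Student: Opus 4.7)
Plan. The strategy is to combine Theorem \ref{prop2009-05-08} with a two-sided application of Lemma \ref{lem2009-05-07-2} along the orbit of a putative diverging isometry, following the argument of \cite[Section 4.2]{KT1}. First, by Theorem \ref{prop2009-05-08}, the hypothesis $\diam(A_p) \le \delta_0$ or $\delta_0 = \pi$ ensures that $F_\gamma$ is an exhaustion for every $\gamma \in \cR_M$. By the Myers--Steenrod theorem, $I(M)$ is a Lie group acting properly by isometries on $M$; since the isotropy subgroup at $p$ embeds in $O(T_pM)$ and is therefore compact, compactness of $I(M)$ is equivalent to boundedness of the orbit $I(M) \cdot p$.

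Suppose for contradiction that $I(M) \cdot p$ is unbounded, and choose $\phi_n \in I(M)$ with $d_n := d(p, \phi_n(p)) \to \infty$. Fix any ray $\gamma \in \cR_p$, so $F_\gamma(p) = 0$. Let $\alpha_n$ (resp.\ $\beta_n$) be a minimal geodesic segment from $p$ to $\phi_n(p)$ (resp.\ from $p$ to $\phi_n^{-1}(p)$; note $d(p,\phi_n^{-1}(p)) = d_n$). After passing to a subsequence, $\alpha_n'(0) \to v$ and $\beta_n'(0) \to w$ with $v, w \in A_p$ (arbitrarily long minimal segments produce rays in the limit), and the hypothesis $\diam(A_p) \le \delta_0$ (automatic when $\delta_0 = \pi$) furnishes the angle condition required to apply Lemma \ref{lem2009-05-07-2} to $\gamma$ along $\alpha_n$ at $q = \phi_n(p)$ and to $\phi_n^{-1}\gamma \in \cR_M$ along $\beta_n$ at $q = \phi_n^{-1}(p)$.

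The key step is the two-sided application. The first application, together with $F_\gamma(p) = 0$ and the $1$-Lipschitz property, yields
\[
F_\gamma(\phi_n(p)) \;\ge\; F_\gamma(\alpha_n(r_2)) + (d_n - r_2)\sin\Lambda_0 \;\ge\; -r_2 + (d_n - r_2)\sin\Lambda_0.
\]
For the second application, the isometry identity $F_{\phi^{-1}\gamma}(x) = F_\gamma(\phi(x))$ (a consequence of $d(x, \phi^{-1}\gamma(t)) = d(\phi(x), \gamma(t))$) gives $F_{\phi_n^{-1}\gamma}(\phi_n^{-1}(p)) = F_\gamma(p) = 0$, whence Lemma \ref{lem2009-05-07-2} forces
\[
F_{\phi_n^{-1}\gamma}(\beta_n(r_2)) \;\le\; -(d_n - r_2)\sin\Lambda_0.
\]
On the other hand, using $F_{\phi_n^{-1}\gamma}(p) = F_\gamma(\phi_n(p))$ and the first application,
\[
F_{\phi_n^{-1}\gamma}(\beta_n(r_2)) \;\ge\; F_{\phi_n^{-1}\gamma}(p) - r_2 = F_\gamma(\phi_n(p)) - r_2 \;\ge\; -2r_2 + (d_n - r_2)\sin\Lambda_0.
\]
Comparing upper and lower bounds forces $2(d_n - r_2)\sin\Lambda_0 \le 2r_2$, which bounds $d_n$ and contradicts $d_n \to \infty$. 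Hence $I(M) \cdot p$ is bounded, and $I(M)$ is compact.

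The main technical obstacle is verifying the exact angle conditions $\angle(\alpha_n'(0), \gamma'(0)) \le \delta_0$ and its analogue for $\beta_n$ and some $\beta_{\phi_n^{-1}\gamma} \in \Pi(\phi_n^{-1}\gamma)$ for finite $n$, rather than only in the subsequential limits $v, w \in A_p$. When $\delta_0 = \pi$ this is automatic; when $\diam(A_p) < \delta_0$ strictly, it follows by continuity for all sufficiently large $n$; the borderline case $\diam(A_p) = \delta_0 < \pi$ is dealt with by the same limit-of-minimal-segments approximation already employed within the proof of Lemma \ref{lem2009-05-07-2}.
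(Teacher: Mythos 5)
Your proposal follows what is essentially the paper's own route: the paper proves this corollary only by invoking the argument of \cite[Section 4.2]{KT1}, and that argument is exactly your scheme --- reduce compactness of $I(M)$ to boundedness of the orbit $I(M)\cdot p$ (Myers--Steenrod plus properness), assume $d_n=d(p,\phi_n(p))\to\infty$, apply the uniform growth estimate of Lemma \ref{lem2009-05-07-2} once at $\phi_n(p)$ for a ray $\gamma\in\cR_p$ and once at $\phi_n^{-1}(p)$ for the ray $\phi_n^{-1}\gamma$, and play the two estimates off each other through the identity $F_{\phi_n^{-1}\gamma}=F_\gamma\circ\phi_n$. Your chain of inequalities is correct, and the argument is complete when $\delta_0=\pi$ (the angle condition is vacuous) and when $\diam(A_p)<\delta_0$ (the strict angle condition holds for large $n$ by continuity).

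The one genuine soft spot is the borderline case $\diam(A_p)=\delta_0<\pi$. There the relevant angles $\angle(\alpha_n'(0),\gamma'(0))$ and $\angle(\beta_n'(0),\beta'(0))$ with $\beta\in\Pi(\phi_n^{-1}\gamma)$ are only known to be at most $\delta_0+\varepsilon_n$ with $\varepsilon_n\to 0$, because $\alpha_n'(0)$ and $\beta_n'(0)$ need not lie in $A_p$; they merely converge into it. Your proposed repair --- the limit-of-minimal-segments approximation at the end of the proof of Lemma \ref{lem2009-05-07-2} --- does not cover this: that device handles a segment whose angle equals $\delta_0$ exactly by approximating it with segments of angle strictly less than $\delta_0$, whereas here the segment you must use has its endpoint pinned at the orbit point and its angle may strictly exceed $\delta_0$, in which case neither Lemma \ref{lem2009-04-26-1} nor the underlying Toponogov-type comparison is applicable at all. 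For the first application one can dodge this by choosing the ray adapted to the limit direction $v=\lim\alpha_n'(0)$, but in the second application the ray $\phi_n^{-1}\gamma$ is forced by the isometry identity, so the difficulty does not disappear. To be fair, the very same subtlety is passed over in the paper's proof of Theorem \ref{prop2009-05-08} (the step where one assumes, after passing to a subsequence, that $\angle(\alpha_j'(0),\beta_{\gamma_j}'(0))\le\delta_0$), so apart from attributing the fix to a device that does not perform it, your proof is at the paper's level of rigor and otherwise sound.
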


\bigskip

\medskip

\begin{center}
Kei KONDO $\cdot$ Minoru TANAKA 

\bigskip
Department of Mathematics\\
Tokai University\\
Hiratsuka City, Kanagawa Pref.\\ 
259\,--\,1292 Japan

\bigskip

{\small
$\bullet$\,our e-mail addresses\,$\bullet$

\bigskip 
\textit{e-mail of Kondo} \,:

\medskip
{\tt keikondo@keyaki.cc.u-tokai.ac.jp}

\medskip
\textit{e-mail of Tanaka}\,:

\medskip
{\tt m-tanaka@sm.u-tokai.ac.jp}
}
\end{center}


\begin{thebibliography}{GMST}%%%%%%%%%%%%%%%%%%%%%%%%
%%%%%%%%%%%%%%%%%%%%%%%

\bibitem[CG]{CG}
J.~Cheeger and D.~Gromoll, 
{\it On the structure of complete manifolds of nonnegative curvature}, 
Ann.\ of \ Math. 
\textbf{96} (1972), 415--443.

\bibitem[CV1]{CV1}
S.~Cohn\,-Vossen, 
{\it K\"urzeste Wege und Totalkr\"ummung auf Fl\"achen}, 
Compositio Math. \textbf{2} (1935), 63--113.

\bibitem[CV2]{CV2}
S.~Cohn\,-Vossen, 
{\it Totalkr\"ummung und geod\"atische Linien auf einfach zusammenh\"angenden 
offenen volst\"andigen Fl\"achenst\"ucken}, 
Recueil Math. Moscow \textbf{43} (1936), 139--163.

\bibitem[D]{D}
U.~Dini, {\it Fondamenti per la teorica delle funzioni di variabili reali},
Pisa, (1878).

\bibitem[GM]{GM}
D.~Gromoll and W.~Meyer, {\it On complete manifolds of positive curvature},
Ann.\ of Math.\ $(2)$ \textbf{75} (1969), 75--90.

\bibitem[H]{H}
P.~Hartman, {\it Geodesic parallel coordinates in the large},
Amer. J. Math. \textbf{86} (1964), 705--727.

\bibitem[Ha]{Ha}
T.~Hawkins, Lebesgue's Theory of Integration : Its origins and development, 
University of Wisconsin Press, Madison 1970.

\bibitem[K]{K}
A.~Kasue, {\it A compactification of a manifold with asymptotically
nonnegative curvature}, Ann. Sci. Ecole Norm, Sup. (4) \textbf{21} (1988), no. 4, 593--622

\bibitem[KK]{KK}
N.\,N.~Katz and K.~Kondo, 
{\it Generalized space forms}, 
Trans. Amer.\ Math.\ Soc. \textbf{354} (2002), 2279--2284. 

\bibitem[KT1]{KT1}
K.~Kondo and M.~Tanaka, 
{\it Total curvatures of model surfaces control 
topology of complete open manifolds with radial curvature bounded below.\,I}, 
to appear in Mathematische Annalen. 
The online first version was published (13 November 2010). 
DOI 10.1007/s00208-010-0593-4 


\bibitem[KT2]{KT2}
K.~Kondo and M.~Tanaka, 
{\it Total curvatures of model surfaces control 
topology of complete open manifolds with radial curvature bounded below.\,II}, 
Trans. Amer.\ Math.\ Soc. \textbf{362} (2010), 6293--6324. 

\bibitem[KT3]{KT3}
K.~Kondo and M.~Tanaka, 
{\it Toponogov comparison theorem for open triangles}, 
Preprint 2009, {\tt http://arxiv.org/abs/0905.3236}

\bibitem[Mo]{Mo}
F.~Morgan, Geometric Measure Theory, A Beginer's Guide, 
Academic Press, 1988.

\bibitem[S]{Sh1}
K.~Shiohama, {\it The role of total curvature on complete noncompact Riemannian $2$-manifolds},
Illinois J.\ Math. \textbf{28} (1984), 597--620.

\bibitem[SST]{SST}
K.~Shiohama, T.~Shioya, and M.~Tanaka, 
The Geometry of Total Curvature on Complete Open Surfaces, 
Cambridge tracts in mathematics \textbf{159}, 
Cambridge University Press, Cambridge, 2003.

\bibitem[T]{T}
M.~Tanaka, {\it On the cut loci of a von Mangoldt's surface of revolution},
J.\ Math.\ Soc.\ Japan \textbf{44} (1992), 631--641.

\bibitem[TK]{KT4}
M.~Tanaka and K.~Kondo, 
{\it The Gauss curvature of a model surface with 
finite total curvature is not always bounded.}, {\tt http://arxiv.org/abs/1102.0852}

\bibitem[To]{To}
V.\,A.~Toponogov, {\it Riemannian spaces containing straight lines} (in Russian), 
Dokl.\ Akad.\ Nauk \ SSSR \textbf{127} (1959), 977--979.

\bibitem[WZ]{WZ}
R.\,L.~Wheeden and A.~Zygmund, Measure and Integral, Marcel Decker, New York, 1977.

\end{thebibliography}
\end{document}